\newcommand{\hidden}[1]{}
\theoremstyle{plain}
\newtheorem{thm}{Theorem}
\newtheorem{cor}{Corollary}
\newtheorem{lem}{Lemma}
\newtheorem{prop}{Proposition}
\theoremstyle{definition}
\newtheorem{dfn}{Definition}
\theoremstyle{claim}
\theoremstyle{fact}
\theoremstyle{remark}
\newtheorem{rem}{Remark}
\theoremstyle{example}
\newtheorem{example}{Example}
\newtheoremstyle{case}{}{}{}{}{}{.}{ }{}
\theoremstyle{case}
\newcommand{\ca}[1]{\mathcal{#1}}
\newcommand{\ve}{\varepsilon}
\newcommand{\beq}{\begin{equation}}
\newcommand{\eq}{\end{equation}}
\newcommand{\supp}{\text{supp}}
\newcommand{\what}{\widehat}
\newcommand{\R}{\mathbb{R}}
\newcommand{\N}{\mathbb{N}}
\newcommand{\Z}{\mathbb{Z}}
\newcommand{\I}{\mathbb{I}}
\newcommand{\cA}{\mathcal{A}}
\newcommand{\cQ}{\mathcal{Q}}
\newcommand{\cS}{\mathcal{S}}
\newtheorem{thkh}{\rm {\bf Theorem \!\! KS}\!\!}
\begin{document}

\title{Inhomogeneous Diophantine Approximation on $M_0$-sets}

\author{Volodymyr Pavlenkov\footnote{Research is supported by the British Academy in partnership with the Academy of Medical Sciences, the
Royal Academy of Engineering, the Royal Society and Cara under the \emph{Researchers at Risk Fellowships} Award RAR\textbackslash 100132} \\ {\small\sc University of York}\\{\small\sc Igor Sikorsky Kyiv Polytechnic Institute }  \and ~Evgeniy Zorin \\ {\small\sc University of York }}

\date{}

\maketitle

\begin{abstract}
We prove new quantitative Schmidt-type theorem for Diophantine approximations with restraint denominators on fractals (more precisely, on $M_0$-sets). Our theorems introduce a sharp balance condition between the growth rate of the sequence of denominators and the decay rate of the Fourier transform of a Rajchman measure. Among the other things, this allows applications to sequences of denominators of polynomial growth. In particular, we infer new inhomogeneous Khintchine-J\"arnik type theorems with restraint denominators for a broad family of denominator sequences. Furthermore, our results provide non-trivial lower bounds for Hausdorff dimensions of intersections of two sets of inhomogeneously well-approximable numbers with restraint denominators.
\end{abstract}



\section{Introduction}
Metric theory of Diophantine approximations originates in the classical problem of approximating irrational numbers, such as $\sqrt{2}$ or $\pi$, by fractions. It was known already in ancient times that some constants arising in practice admit good rational approximations, providing nice precision while having a relatively not so large denominator (for example, rational approximations to $\pi$ such as $22/7\approx 3.14\dots$ and $355/113\approx 3.141593\dots$). A natural question to ask is to what extent such good rational approximations are typical for real numbers? In other terms, for a generic real number, what one should expect to have as a balance between the precision of a rational approximation and the size of its denominators? A bit surprisingly,
almost all rational numbers allow essentially the same asymptotic order of the best rational approximations when compared to the size of denominators of these approximations. This fact was established by Khintchine in~1929 (see Theorem~KS in Subsection~\ref{section_KS} below). Later, Schmidt refined this result in~1964 by proving that, for almost all numbers, the best rational approximations appear at quite regular intervals.

The development of metric theory of Diophantine approximations conducted to research on more complicated situations. For example, studies of simultaneous applications led to considering approximations of one real number by convergents (the best rational approximations) of another one, which naturally led to studies of approximations with constraints on available denominators. At the same time, in another direction of research, the metric theory of Diophantine approximations was extended to the context of fractal sets. Motivation for the latter was manifold, ranging from pure extension of existing theory to the demand by practical applications. Importantly, many of sets of interest arising in the metric theory of Diophantine approximations are fractals in nature (e.g. the sets of badly approximable numbers, well approximable numbers, the sets of exact order etc.), so the studies of Diophantine approximations on fractals provide, among the other things, a potent tool for investigating intersections of such sets.

So, a natural progress in this field has led 
to a large stream of research 
on metric theory of Diophantine approximations on fractals. It would have been an impossible task to give a comprehensive literature review on this subject within the framework of this paper, so we benefit from the possibility to refer the reader to~\cite{durham} for a general review of the subject and to~\cite{MIMO} for some examples of applications.

This paper 
develops further metric theory of Diophantine approximations on so-called $M_0$-sets.

\begin{dfn}\label{def1}
The set $F$ is called an {\em $M_0$-set} if it supports a non-atomic probability Borel measure $\mu$ ($\supp(\mu) \subset F$) whose Fourier transform $\widehat{\mu}$  vanishes at infinity, i.e.
$$\lim_{|t| \to \infty} |\widehat{\mu}(t)| = 0,$$
where
$$ \widehat{\mu}(t) \, := \int   e^{-2\pi it x}  \, \mathrm{d}\mu (x),      \hspace{1cm} (t\in \Z )\, .  \vspace{2mm}$$
Such a measure $\mu$ is called a Rajchman measure.
\end{dfn}

The interest of studying $M_0$-sets is bifold. On the one hand, there are quite many classical (fractal) sets of interest that support a Rajchman measure (see, for example, \cite{Bl,B,Hambrook,Kaufman,IFS,QR}). On another hand, it is quite natural to expect, at least heuristically, that such measures' behaviour have significant similarities with Lebesgue measure: indeed, one would naturally expect that, as $|\hat{\mu}(t)|$ tends to 0 as $t\to 0$, the behaviour of $\mu$ is determined to a significant degree (whatever it means in concrete terms) by some first coefficients, $\hat{\mu}(t)$ for $t=-N,\dots,N$. And then, of course, the measure with Fourier transform $\sum_{t=-N}^N\hat{\mu}(t)\exp\left(-2\pi i t\right)$ is absolutely continuous with respect to Lebesgue.
See a more detailed discussion and some further references in~\cite{povezazo}, where some quantitative results that illustrate the heuristics above have been proven and some other results from the past literature have been discussed from this point of view.


It is important to remark straight away that to have any meaningful generalization of Khintchine Theorem to the case of sufficiently general $\mu$, for instance to suitable Rajchman measures, we necessarily have to consider approximations with restrained sequences of denominators. Indeed, a classical construction by Kaufman~\cite{Kaufman}, later updated by Queffelec-Ramar\'e~\cite{QR}, shows that the set of badly approximable numbers supports a Rajchman measure (indeed, an infinite family of such measures). So, it is well possible that, with respect to certain Rajchman measures $\mu$, even with Fourier transform quickly converging to 0, $\mu$-almost all numbers are badly approximable, contrary to what we should have had with the usual Khintchine Theorem.

In~\cite{povezazo}, Khintchine type theorem (in fact, Schmidt-type counting theorem) is proven for sufficiently quickly growing sequences of denominators $\cA=(q_n)_{n\in\N}$. 
In fact, in~\cite{povezazo} the convergence part of Khintchine theorem is established in quite a general situation: for arbitrary sequence of denominators $\cA=(q_n)_{n\in\N}$ and 
under one of the following two conditions on the measure $\mu$
\begin{equation} \label{cond2_2}
\sum_{n=1}^{\infty} \; \max_{k\in\Z / \{ 0\} }|\widehat{\mu}(kq_n)|< \infty,
\end{equation}
\begin{equation} \label{cond2}
\sum_{n=1}^{\infty}   \, \sum_{k\in\Z / \{ 0\} }  \frac{|\widehat{\mu}(kq_n)|}{|k|}< \infty.
\end{equation}
However, the most challenging divergence part is established only for Rajchman measures with a sufficiently quickly decaying Fourier transform and, what is the most relevant for the topic of this paper, under quite an important growth condition on the sequence $\ca{A} = (q_n)_{n\in \N}$ (in particular, the growth of $\cA$ required there strictly exceeds the polynomial one, and indeed should be of order of magnitude $q_n\geq\exp\left(n^c\right)$ for some $c>0$).

In this paper, we prove a quantitative Schmidt-type theorem under a balance condition between the growth rate of the sequence $\ca{A} = (q_n)_{n\in \N}$ and the decay rate of the Fourier transform $\widehat{\mu}$. This balance condition allows, on the one hand, to establish Khintchine Theorem for some well-known measures (for instance, Kaufmann measures on the sets on badly- or well-approximable numbers) for the sequences of denominators of $\cA$ of polynomial growth. On another hand, 
this balance condition allows 
applications even to Rajchman measures with a relatively slowly decaying Fourier transform. For example, recently it was proven that, under quite broad conditions, there exist Rajchman measures on self-similar sets associated to contractions, but the decay rate of the Fourier transforms of the measures constructed is, when $t\to\infty$, $O(\log(|t|)^{-\delta})$, where $\delta>0$ is a constant not explicitly given in the paper, and the methods implied suggest that $\delta>0$ is very small (much smaller than one, for instance). In such situation, our main result, Theorem~\ref{mainTHM}, still allows to obtain a Khintchine type theorem for sufficiently sparse sequences of denominators.

The plan of this paper is as follows.

In Section~\ref{section_KS} we give some definitions and introduce some basic notation needed in our further discussion, and, in particular, to state our main result. In the same section we state Khintchine's theorem and also explain Schmidt's result, both of them are mentioned in the introduction.

In Section~\ref{Section_main_result} we state the main result of this paper.

In Section~\ref{Section_applications} we provide some examples of applications of our main result.

Finally, we give proofs in Section~\ref{Section_proof}.

\section{Khintchine-Sz\"{u}sz $0$ and $1$ Law and Schmidt generalization} \label{section_KS}

We begin by giving a few of basic definitions.

Given a real number $\gamma\in\I $, approximation function $\psi:\mathbb{N}\rightarrow \I$ and a natural number $q \in \N$,  let
\begin{equation} \label{def_En}
E(q, \gamma, \psi):=   \left\{ x\in \I : \| qx-\gamma \|\leq \psi(q) \right\},
\end{equation}
where $\| \alpha \|:=\min \{|\alpha-m|:m\in\Z\}$ denotes  the distance from $\alpha\in \R $ to the nearest integer.
For any sequence $\ca{A} = (q_n)_{n\in \N} \subset \N$ of natural numbers, define, for any $N\in\N$, the counting function by
\begin{equation}\label{countdef}
R(x,N) \, = \, R(x,N;\gamma,\psi, \ca{A}) := \# \big\{ 1\leq n \leq N : x \in E(q_n, \gamma, \psi)  \big\}.
\end{equation}
Note that the definition above could be rewritten as
\begin{equation}\label{countdef_chi}
R(x,N)=\sum_{n=1}^N \chi_{_{E(q_n, \gamma, \psi)}} (x),
\end{equation}
where $\chi_{_{E(q_n, \gamma, \psi)}}$ is the characteristic function of the set $E(q_n, \gamma, \psi).$

Recall that the set of inhomogeneous $\psi$-well approximable real numbers
\begin{equation}\label{wellset}
W_{\ca{A}}(\gamma; \psi):= \left\{ x\in \I: \|q_nx-\gamma \| \leq \psi(q_n) \text{ for infinitely many } n\in\mathbb{N} \right\}. \vspace{2mm}
\end{equation}
Note that
\begin{equation}\label{wellset_limsup}
W_{\ca{A}}(\gamma; \psi)=\limsup E(q_n, \gamma, \psi)=\bigcap_{n=1}^{\infty}\bigcup_{k=n}^{\infty}E(q_k, \gamma, \psi).
\end{equation}

We gathered in Proposition~\ref{prop1} below some basic properties of the function $R(x,N)$ and sets $W_{\ca{A}}(\gamma; \psi)$ that we will use in the proof of our main result.
\begin{prop}\label{prop1}
Let $x \in \I,$ $\gamma\in\I$ and let $\psi, \psi_1, \psi_2$ be auxiliary functions, that is $\psi, \psi_1, \psi_2:\mathbb{N}\rightarrow \I$. Let
$\ca{A} = (q_n)_{n\in \N} \subset \N$ be a sequence of natural numbers. Then,

\textbf{\emph{(i)}} $x \in W_{\ca{A}}(\gamma; \psi)$ if and only if $$\mathop{\lim}\limits_{N \to \infty}R(x,N) =\infty;$$

\textbf{\emph{(ii)}} if $\psi_1(q_n) \le \psi_2(q_n)$ for all $n \in \N,$ then
$$W_{\ca{A}}(\gamma; \psi_1) \subset W_{\ca{A}}(\gamma; \psi_2),$$
and
$$R(x,N;\gamma,\psi_1, \ca{A}) \le R(x,N;\gamma,\psi_2, \ca{A});$$

\textbf{\emph{(iii)}} if for some auxiliary functions $\psi_1, \psi_2$ we have $x \notin W_{\ca{A}}(\gamma; \psi_2)$, then
$$R(x,N;\gamma,\max\{\psi_1,\psi_2\}, \ca{A})=R(x,N;\gamma,\psi_1, \ca{A})+O(1).$$
\end{prop}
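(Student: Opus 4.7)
The plan is to prove each of the three parts by direct unpacking of the definitions.

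For (i), I would note that by (\ref{countdef}), $R(x,N)$ is a non-decreasing, non-negative integer-valued function of $N$, so its limit as $N \to \infty$ exists in $\N \cup \{0,\infty\}$ and equals $\infty$ precisely when the set $\{n \in \N : x \in E(q_n,\gamma,\psi)\}$ is infinite, which via (\ref{wellset_limsup}) is exactly $x \in W_{\ca{A}}(\gamma;\psi)$. For (ii), the inclusion $E(q_n,\gamma,\psi_1) \subset E(q_n,\gamma,\psi_2)$ follows immediately from (\ref{def_En}) whenever $\psi_1(q_n) \le \psi_2(q_n)$; pointwise dominance of characteristic functions together with (\ref{countdef_chi}) yields the counting inequality, and taking $\limsup$ of nested sets preserves inclusion and gives $W_{\ca{A}}(\gamma;\psi_1) \subset W_{\ca{A}}(\gamma;\psi_2)$.

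For (iii), I would start from the observation that $x \notin W_{\ca{A}}(\gamma;\psi_2)$ means $x$ lies in $E(q_n,\gamma,\psi_2)$ only for indices $n$ in some finite set $S \subset \N$. For each fixed $n \notin S$, I would split into two cases according to the sign of $\psi_2(q_n) - \psi_1(q_n)$. If $\psi_1(q_n) \ge \psi_2(q_n)$, then $\max\{\psi_1,\psi_2\}(q_n) = \psi_1(q_n)$ and the two indicator functions trivially coincide at $x$. If $\psi_1(q_n) < \psi_2(q_n)$, the indicator for $\max\{\psi_1,\psi_2\}$ at $x$ equals $\chi_{E(q_n,\gamma,\psi_2)}(x)$, which vanishes because $n \notin S$; the inclusion $E(q_n,\gamma,\psi_1) \subset E(q_n,\gamma,\psi_2)$ from (\ref{def_En}) then forces the indicator for $\psi_1$ at $x$ also to vanish. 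Hence the two counting functions agree at every $n \notin S$ and can differ only through the $|S|$ indices in $S$, producing the desired $O(1)$ error.

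None of the three parts presents a genuine obstacle; the only point requiring care is the two-case analysis in (iii), whose $O(1)$ bound reduces to the finiteness of $S$ guaranteed by the assumption $x \notin W_{\ca{A}}(\gamma;\psi_2)$.
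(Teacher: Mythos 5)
Your argument is correct in all three parts. Parts (i) and (ii) are handled exactly as in the paper (direct unpacking of the definitions \eqref{def_En}, \eqref{countdef} and \eqref{wellset}), but your treatment of (iii) is organised differently: the paper first uses (ii) to sandwich the counting function, namely $R(x,N;\gamma,\psi_1,\ca{A}) \le R(x,N;\gamma,\max\{\psi_1,\psi_2\},\ca{A}) \le R(x,N;\gamma,\psi_1,\ca{A}) + R(x,N;\gamma,\psi_2,\ca{A})$, and then invokes (i) to conclude that $R(x,N;\gamma,\psi_2,\ca{A})$ stays bounded, whereas you compare the indicator functions index by index and show they coincide outside the finite set $S$ of indices $n$ with $x \in E(q_n,\gamma,\psi_2)$. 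Both routes rest on the same underlying fact (finiteness of $S$); the paper's version is slightly slicker because it recycles (i) and (ii) verbatim, while yours is marginally more informative, since it exhibits the $O(1)$ discrepancy as a difference supported on the fixed finite index set $S$ rather than merely as a bounded quantity. Either proof is complete and acceptable.
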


\begin{proof}[Proof of Proposition \ref{prop1}]
Statements (i) and (ii) of Proposition \ref{prop1} follow directly from definitions \eqref{def_En}, \eqref{countdef} and \eqref{wellset}.
In order to prove (iii), note that from (ii) of Proposition \ref{prop1} we have, for any $N \in \N$,
\begin{equation}\label{eq1}
  R(x,N;\gamma,\psi_1, \ca{A}) \le R(x,N;\gamma,\max\{\psi_1,\psi_2\}, \ca{A}) \le R(x,N;\gamma,\psi_1, \ca{A}) + R(x,N;\gamma,\psi_2, \ca{A}).
\end{equation}
Since $x \notin W_{\ca{A}}(\gamma; \psi_2),$ (i) of Proposition \ref{prop1} implies that
$R(x,N;\gamma,\psi_2, \ca{A})$ remains bounded as $N \to \infty.$ This fact together with \eqref{eq1} completes the proof of Proposition \ref{prop1}.
\end{proof}

Khintchine-Sz\"{u}sz Theorem provides the $0$ and $1$ law for the Lebesgue size of the set $W_{\N}(\gamma; \psi)$. Khintchine \cite{Khintchine24} proved the homogeneous statement (the case $\gamma = 0$) in 1924, and later, in 1954, Sz\"{u}sz \cite{szusz} generalized Khintchine's result to the inhomogeneous case.

\begin{thkh}
{\em Let $\gamma\in\I  $ and $\psi:\mathbb{N}\rightarrow \I$ be a real, positive  non-increasing function.  Then}
$$ m\big(W_{\N}(\gamma; \psi)  \big) =\left\{
\begin{array}{ll}
  0 & \mbox{if} \;\;\; \sum\limits_{n=1}^{\infty}\psi(n)  <\infty\;
      ,\\[2ex]
  1 & \mbox{if} \;\;\; \sum\limits_{n=1}^{\infty}\psi(n)
      =\infty \; ,
\end{array}\right .$$
{\em where $m$ is the Lebesgue measure. }
\end{thkh}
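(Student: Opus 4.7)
The plan is to split the argument along the standard convergence/divergence dichotomy and handle each half separately, as is classical for Khintchine-type results.

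For the convergence part, I would first observe that the set $E(n, \gamma, \psi)$ is a union of $n$ intervals of the form $[(k+\gamma-\psi(n))/n,\,(k+\gamma+\psi(n))/n]$ for $k=0,1,\dots,n-1$, so its Lebesgue measure is exactly $2\psi(n)$ (assuming $\psi(n)\le 1/2$, which can be arranged by truncating without affecting the statement). Since $W_{\N}(\gamma;\psi)=\limsup_n E(n,\gamma,\psi)$ by \eqref{wellset_limsup}, and $\sum_n m(E(n,\gamma,\psi))=2\sum_n\psi(n)<\infty$ under the convergence hypothesis, the first Borel--Cantelli lemma immediately yields $m(W_{\N}(\gamma;\psi))=0$.

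For the divergence part, I would invoke a divergence Borel--Cantelli lemma of Erd\H{o}s--Chung / Davenport--Erd\H{o}s--LeVeque type (this is, in spirit, Theorem~DEL referenced in the macro definitions at the top of the paper). The key hypothesis required is a quasi-independence on average estimate of the form
\begin{equation*}
\sum_{m,n\le N} m\bigl(E(m,\gamma,\psi)\cap E(n,\gamma,\psi)\bigr) \;\le\; C\Bigl(\sum_{n\le N} m(E(n,\gamma,\psi))\Bigr)^{2} + \text{error},
\end{equation*}
which, combined with $\sum_n m(E(n,\gamma,\psi))=\infty$, forces $m(W_{\N}(\gamma;\psi))>0$. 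A standard zero--one law for tail events invariant under rational translations then upgrades this to full measure.

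The main technical step, therefore, is the pairwise estimate on $m(E(m,\gamma,\psi)\cap E(n,\gamma,\psi))$ for $m\ne n$. I would parametrize a point $x$ in the intersection by writing $mx=a+\gamma+\varepsilon_1$ and $nx=b+\gamma+\varepsilon_2$ with integers $a,b$ and $|\varepsilon_i|\le\psi(q_i)$; eliminating $x$ gives $na-mb=(n-m)\gamma+n\varepsilon_1-m\varepsilon_2$, a linear relation which constrains the admissible pairs $(a,b)$ to roughly $mn$ choices. For each such pair the set of $x$ has length $O(\min\{\psi(m)/m,\psi(n)/n\})$, so summing yields a bound $\le 4\psi(m)\psi(n)+O(\gcd(m,n)\cdot\max(\psi(m),\psi(n))/(mn))$. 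The error term is then absorbed by a second application of Borel--Cantelli to the sparse family indexed by pairs with large gcd, which is where the monotonicity of $\psi$ is essential.

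The hard part will be controlling the contribution of pairs $(m,n)$ with large common factor; this is exactly where Szüsz had to adapt Khintchine's original homogeneous argument, since the inhomogeneous parameter $\gamma$ appears linearly in the Diophantine relation $na-mb=(n-m)\gamma+\dots$ and, unlike the homogeneous case, does not vanish. Monotonicity of $\psi$ permits a dyadic regrouping that tames this term, after which the quasi-independence inequality closes and the divergence Borel--Cantelli lemma delivers full Lebesgue measure.
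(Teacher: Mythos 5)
First, a point of reference: the paper does not prove Theorem~KS at all --- it is stated as a classical result and attributed to Khintchine \cite{Khintchine24} and Sz\"usz \cite{szusz} --- so there is no in-paper proof to compare against line by line. Your outline is nevertheless the standard route, and it is essentially the same machinery the paper deploys for its own Theorem~\ref{mainTHM}: the convergence half via the first Borel--Cantelli lemma is complete and correct as you state it, and the divergence half rests on an overlap estimate of the form $m(E(m,\gamma,\psi)\cap E(n,\gamma,\psi))=4\psi(m)\psi(n)+O\bigl(\gcd(m,n)\min\bigl(\tfrac{\psi(m)}{m},\tfrac{\psi(n)}{n}\bigr)\bigr)$ fed into a quantitative Borel--Cantelli lemma (compare Lemma~\ref{ebc} and the term $E(N)$ in \eqref{error2sv}).

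Two concrete gaps remain in the divergence half. \textbf{(a)} Your stated error term $O\bigl(\gcd(m,n)\cdot\max(\psi(m),\psi(n))/(mn)\bigr)$ is not the correct one: the number of overlapping pairs of intervals is $O\bigl(n\psi(m)+m\psi(n)+\gcd(m,n)\bigr)$, and multiplying by the overlap length $O\bigl(\min(\psi(m)/m,\psi(n)/n)\bigr)$ gives the error $O\bigl(\gcd(m,n)\min(\psi(m)/m,\psi(n)/n)\bigr)$, which is larger than yours by a factor that can be as big as $\min(m,n)$. More importantly, the genuinely hard step --- showing that $\sum_{m<n\le N}\gcd(m,n)\min(\psi(m)/m,\psi(n)/n)=o(\Psi(N)^2)$ for monotone $\psi$ with divergent sum, which is where monotonicity actually enters --- is asserted (``a dyadic regrouping that tames this term'') but not carried out; this is the core of Khintchine's and Sz\"usz's arguments and cannot be waved through. \textbf{(b)} The upgrade from positive to full measure via ``a standard zero--one law for tail events invariant under rational translations'' does not apply as stated when $\gamma\neq 0$: for $x\mapsto x+p/q$ one has $\|n(x+p/q)-\gamma\|=\|nx-\gamma+np/q\|$, which equals $\|nx-\gamma\|$ only along the subsequence $q\mid n$, so $W_{\N}(\gamma;\psi)$ is not invariant under rational translations. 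One must either invoke a genuinely inhomogeneous zero--one law, or (as in the quantitative approach via Lemma~\ref{ebc}) run the quasi-independence estimate locally on every subinterval and conclude by Lebesgue density, which then yields the counting asymptotic and full measure directly without any zero--one law.
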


In 1964 Schmidt~\cite{schfracparts} generalized Khintchine-Sz\"{u}sz Theorem giving a quantitative result on the size of counting function $R(x,N)$ given by~\eqref{countdef} with decreasing auxiliary function $\psi$ and $\ca{A} = \N$: 

\[
 R(x,N)  \ = \   2\Psi(N)  +  O\Big(\Psi(N)^{1/2}\left(\log(\Psi(N))\right)^{2+\varepsilon}\Big), \quad N \in \N,
\]
for every $\varepsilon>0$ and for $m$-almost all $x,$
where
\begin{equation*}
\Psi(N):=\sum_{n=1}^N\psi(n).
\end{equation*}

Our main result, given in Section~\ref{Section_main_result} below, is a Schmidt-type counting theorem.


\section{Main result} \label{Section_main_result}
The statement of our main theorem uses the following definition.

\begin{dfn}\label{def2}
Let $\ca{A}= (q_n)_{n\in \N} $  be an increasing sequence of natural numbers and let $\alpha\in(0,1)$ be a real number. We say that $ \ca{A} $  is \emph{$\alpha$-separated} if there exists $m_0\in\N$ such that, for all $m,n \in \N$, $m_0\leq m<n$, the set of solutions $(s,t) \in \N^2$ of the following system of Diophantine inequalities
$$\begin{cases}
    1 \leq |sq_m-t q_n|< q_m^{\alpha}, \\
    s \le m^{5},
  \end{cases}$$
is empty.
\end{dfn}

\begin{rem}
Note that Definition~\ref{def2} is similar, but not identical, to the one in~\cite{povezazo} (see \cite[p.~12]{povezazo}). In Definition~\ref{def2} we have optimized one exponent, so we have $m^5$ in place of $m^{12}$ in~\cite{povezazo}. As a result, if a sequence of denominators $\cA$ is $\alpha$-separated in the sense of~\cite{povezazo}, than it is necessarily $\alpha$-separated in the sense of Definition~\ref{def2} (but not necessarily vice versa).
\end{rem}

\begin{thm} \label{mainTHM}
Let $\mu$ be a non-atomic probability  measure supported on a subset $F$ of
$ \ \I \, $.  Let $\ca{A}= (q_n)_{n\in \N} $ be an $\alpha$-separated increasing sequence of natural numbers for some $\alpha\in(0,1).$
Suppose there exists a real constant $\rho > 2$ 
and a monotonically decreasing function $h:\N\to\I$ verifying
\begin{equation}\label{growcond_f}
  h(q_n)\, = \, O\ (n^{-\rho}), \quad n \in \N.
\end{equation}
such that
\begin{equation}\label{growcond}
  |\widehat{\mu}(t)|\, = \, O\ (h(|t|)), \quad t \in \Z.
\end{equation}
Then, for all given $\gamma\in\I,  $ $\psi:\mathbb{N}\rightarrow \I$ and for
any $\varepsilon>0$ the counting function $R(x,N)$ satisfies
\begin{equation} \label{countFSPresultsv}
\begin{array}{ll}
 R(x,N)  \ = \   2\Psi(N)  + O\Big(\left(\Psi(N)+E(N)\right)^{1/2}\left(\log(\Psi(N)+E(N)+2\right)\big)^{2+\varepsilon}\Big)
\end{array}
\end{equation}
for $\mu$-almost all $x\in F$,  where
\begin{equation} \label{def_Psi}
\Psi(N):=\sum_{n=1}^N\psi(q_n) \,
\end{equation}
and
\begin{equation} \label{error2sv}
E(N) \ := \ \mathop{\sum\sum}_{1\leq m<n\leq N} (q_m,q_n)  \ \min\left(\frac{\psi(q_m)}{q_m},\frac{\psi(q_n)}{q_n}\right)  \,,
\end{equation}
here $(q_m,q_n)$ is the gcd of natural numbers $q_m$ and $q_n.$
\end{thm}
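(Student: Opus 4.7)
The plan is to apply a Davenport--Erd\H{o}s--LeVeque type variance criterion (Corollary~DEL) to the random variable $R(x,N)=\sum_{n=1}^N \chi_{E(q_n,\gamma,\psi)}(x)$ on the probability space $(F,\mu)$. If one can show
$$\int_F \bigl(R(x,N)-2\Psi(N)\bigr)^2\,d\mu(x) \;=\; O\bigl(\Psi(N)+E(N)+1\bigr),$$
then Corollary~DEL produces exactly the asserted asymptotic with the logarithmic factor $\bigl(\log(\Psi(N)+E(N)+2)\bigr)^{2+\varepsilon}$. So the real work reduces to computing the first two moments of $R(x,N)$ against~$\mu$.

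For the first moment, I would pass to Fourier expansions on $\R/\Z$. Writing $\chi_{E(q_n,\gamma,\psi)}(x)=f_n(q_n x)$, where $f_n$ is the $\Z$-periodization of the indicator of $[\gamma-\psi(q_n),\gamma+\psi(q_n)]$, the Fourier coefficients satisfy $a_0^{(n)}=2\psi(q_n)$ and $|a_k^{(n)}|\le \min\bigl(2\psi(q_n),(\pi|k|)^{-1}\bigr)$ for $k\neq 0$. Integrating term by term gives
$$\mu\bigl(E(q_n,\gamma,\psi)\bigr)=2\psi(q_n)+\sum_{k\neq 0}a_k^{(n)}e^{-2\pi ik\gamma}\widehat{\mu}(kq_n),$$
and, using $|\widehat{\mu}(kq_n)|=O(h(|k|q_n))\le O(n^{-\rho})$ together with $\sum_{k\neq 0}|a_k^{(n)}|\ll 1+|\log\psi(q_n)|$, the hypothesis $\rho>2$ yields $\sum_{n=1}^N \mu\bigl(E(q_n,\gamma,\psi)\bigr) = 2\Psi(N)+O(1)$, which is absorbed into the error term.

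The heart of the matter is the second moment, where one must estimate, for each pair $m<n$,
$$\mu\bigl(E(q_m)\cap E(q_n)\bigr)=\sum_{k,l\in\Z}a_k^{(m)}a_l^{(n)}e^{-2\pi i(k+l)\gamma}\widehat{\mu}\bigl(-(kq_m+lq_n)\bigr).$$
The $(k,l)=(0,0)$ term is $4\psi(q_m)\psi(q_n)$ and cancels the product of the first moments. The terms with $kq_m+lq_n=0$ correspond to arithmetic incidences between the intervals making up $E(q_m)$ and $E(q_n)$, and produce exactly the Lebesgue-type $\gcd$ contribution $(q_m,q_n)\min\bigl(\psi(q_m)/q_m,\psi(q_n)/q_n\bigr)$ that populates $E(N)$. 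To control the remaining terms I would split the $(k,l)$-sum according to $|k|\le m^5$ versus $|k|>m^5$: on the first range, Definition~\ref{def2} combined with $m\ge m_0$ forces $|kq_m+lq_n|\ge q_m^\alpha$, so $|\widehat{\mu}(kq_m+lq_n)|=O(h(q_m^\alpha))$, a very strong decay; on the second range, the coefficient bound $|a_k^{(m)}|\le 1/(\pi|k|)$ alone already gives summable decay in $k$. Together these two bounds constitute a quantitative quasi-independence on average, in the spirit of Lemma~2.

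The principal obstacle is the bookkeeping of this second-moment estimate: one must verify that combining (i) the lower bound from $\alpha$-separation for small $|k|$, (ii) the coefficient decay $|a_k^{(m)}|\le (\pi|k|)^{-1}$ for large $|k|$, and (iii) the polynomial rate $h(q_n)=O(n^{-\rho})$ with $\rho>2$, the double sum over $1\le m<n\le N$ together with the inner $(k,l)$-sums all fit inside $O\bigl(\Psi(N)+E(N)+1\bigr)$ without any parasitic term that would force a sparser sequence $\cA$ or a faster Fourier decay. It is precisely here that the two hypotheses ($\alpha$-separation and $\rho>2$) are exactly balanced. Once this variance bound is established, a direct application of Corollary~DEL concludes the proof.
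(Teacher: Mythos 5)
Your overall architecture (second--moment method, a Davenport--Erd\H{o}s--LeVeque/Harman-type quasi-independence lemma, Fourier expansion of the events, the gcd term coming from the resonances $kq_m+lq_n=0$, and $\alpha$-separation killing the near-resonances) is the same as the paper's. But there are genuine gaps in the execution, all stemming from your use of the \emph{sharp} indicator functions. First, the claim $\sum_{k\neq 0}|a_k^{(n)}|\ll 1+|\log\psi(q_n)|$ is false: with $|a_k^{(n)}|\le\min\bigl(2\psi(q_n),(\pi|k|)^{-1}\bigr)$ the tail $\sum_{|k|>K}(\pi|k|)^{-1}$ diverges, so the indicator is not in the Wiener algebra and term-by-term integration of its Fourier series against an arbitrary Rajchman measure is not justified; the uniform bound $|\widehat\mu(kq_n)|=O(n^{-\rho})$ does not rescue the sum because it gives no decay in $k$. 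The same problem recurs in your second-moment estimate: on the range $|k|>m^5$ you assert that $|a_k^{(m)}|\le(\pi|k|)^{-1}$ ``alone already gives summable decay in $k$,'' but $\sum_{|k|>m^5}|k|^{-1}$ diverges. The paper avoids both failures by replacing the indicators with continuous Selberg-type majorants and minorants $\chi^{\pm}_{\delta,\varepsilon}$ whose Fourier coefficients decay like $1/(s^{2}\psi(q)\varepsilon)$ and are absolutely summable with total mass $O(\varepsilon^{-1/2})$; it then optimizes the smoothing parameters as $\varepsilon_n\asymp\min\bigl(1,(\sum_{k\le n}\psi(q_k))^{-1}\bigr)$. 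That optimization is not cosmetic: it produces an unavoidable extra factor $\Psi\log^{+}\Psi$ in the variance (so the true bound is $O\bigl(\Psi\log^{+}\Psi+\Psi+E\bigr)$, not your $O(\Psi+E+1)$), and this is precisely why the theorem's exponent is $2+\varepsilon$ rather than the $3/2+\varepsilon$ your claimed variance bound would yield.

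Two further points. The Harman lemma actually used (Lemma~1.5 of Harman's book) requires the variance bound over \emph{every} block $a\le n\le b$, not just over $[1,N]$; your proposal only states it for the full range, and the block version is what forces the careful dependence of $\varepsilon_n$ on the block. Finally, matching the cutoff in the $(k,l)$-sum to the $s\le m^{5}$ clause of Definition~\ref{def2} is not automatic: the paper cuts at $s\le m^{3}/\psi(q_m)$ (which is what the coefficient bounds naturally produce) and must first reduce, via Proposition~\ref{prop1} and the auxiliary function $\psi^{*}=\max\{\psi,n^{-\tau}\}$, to the case $\psi(q_m)\ge m^{-\tau}$ so that this cutoff implies $s\le m^{5}$. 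Your direct cutoff at $|k|\le m^{5}$ sidesteps that reduction only at the price of the divergent harmonic tail noted above.
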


\begin{rem}\label{remassympt}
  If, under all conditions of Theorem~\ref{mainTHM}, for some $\varepsilon>0$ the gcd term $E(N),$ given by~\eqref{error2sv}, satisfies
  equality
  \begin{equation}\label{gcdbound}
    E(N)=O \left(\Psi^{2-\varepsilon}(N)\right), \quad N \in \N,
  \end{equation}
  and $\Psi(N) \to \infty$ as $N \to \infty,$ then for the counting function $R(x,N)$ we have the following asymptotic statement
  \begin{equation}\label{countasympt}
    \lim_{N \to \infty} \frac{R(x,N)}{2\Psi(N)}=1,
  \end{equation}
  for $\mu$-almost all $x\in F.$
\end{rem}

We postpone the proof of Theorem~\ref{mainTHM} until Section~\ref{Section_proof}. In the following Section~\ref{Section_applications} we give some examples of applications of this theorem.

\section{Applications of main result} \label{Section_applications}
In this section we 
give some examples of application of Theorem~\ref{mainTHM}.
\subsection{Example of sequence $\ca{A}$ with polynomial growth and fitting all conditions of Theorem~\ref{mainTHM}}



In this part, we construct, as an example, a family of sequences of denominators $\cA$ of polynomial growth for which Theorem~\ref{mainTHM} gives non-trivial results.
\begin{example} \label{example_PC}
In this example, we provide a construction of an $\alpha$-separated sequence $(q_m)_{m\in\mathbb{N}}$ of polynomial growth (more precisely, verifying~\eqref{example_PC_ub} for parameters $\rho_1<\rho_2$ as below).

Let
\begin{equation} \label{example_PC_assumption_rho_one_rho_two}
1<\rho_1<6\rho_1<\rho_2
\end{equation}
and $c>1$ be real parameters. 
Let us choose a sequence of integers $(n_k)_{k\in\N}$ as follows. First, choose $n_1$ to be a sufficiently large positive integer, so that $n_1^{\rho_1}<n_1^{\rho_2}/c$. Then, for every $k\in\N$, choose arbitrarily $n_{k+1}$ to be an integer in the range
\begin{equation} \label{choice_n_k_plus_1}
\lfloor  n_k^{\rho_2}\rfloor/c \leq n_{k+1} \leq \lfloor  n_k^{\rho_2}\rfloor.
\end{equation}
For each $k\in\N$, define sets of integers
\begin{equation} \label{def_cQk}
\cQ_k:=\left\{ s\cdot n_k \mid s=1,\dots, \lfloor n_k^{\rho_1-1} \rfloor \right\}.
\end{equation}
Then, define $\cA=(q_m)_{m\in\N}$ to be the set of numbers
\[
\bigcup_{k\in\N}\cQ_k
\]
put in increasing order.

We claim that we have, for every $m\in\N$ sufficiently large (so that we have $m>n_1^{\rho_1-1}$),
\begin{equation} \label{example_PC_ub}
\frac{\log q_m}{\log m}\leq \frac{\rho_2}{\rho_1-1}+1.
\end{equation}
Indeed, every $q_m$ has a form $s\cdot n_k$ for some $s,k\in\N$, where $s\leq n_k^{\rho_1-1}$. 
The lower bound on $m$ above means that $k\geq 2$. Then,
\[
m\geq s+\lfloor n_{k-1}^{\rho_1-1}\rfloor \geq s+\lfloor n_k^{(\rho_1-1)/\rho_2}\rfloor.
\]
Consider two cases: the first one when $s\leq \lfloor n_k^{(\rho_1-1)/\rho_2}\rfloor$, and the second one when $s>\lfloor n_k^{(\rho_1-1)/\rho_2}\rfloor$.

\noindent \textbf{Case~1.} Assume $s\leq \lfloor n_k^{(\rho_1-1)/\rho_2}\rfloor$. Then, we have $m\geq n_k^{(\rho_1-1)/\rho_2}$ (we use here $s\geq 1$), hence
\[
\frac{\log q_m}{\log m}\leq \frac{\log s + \log n_k}{\log n_k^{(\rho_1-1)/\rho_2}}\leq\frac{\log n_k^{(\rho_1-1)/\rho_2}+\log n_k}{\log n_k^{(\rho_1-1)/\rho_2}}\leq \frac{\rho_2}{\rho_1-1}+1.
\]
This proves~\eqref{example_PC_ub} in the first case.

\noindent \textbf{Case~2.} Assume $s > \lfloor n_k^{(\rho_1-1)/\rho_2}\rfloor$. As $s$ is an integer, we also have then $s >  n_k^{(\rho_1-1)/\rho_2}$, hence
\[
\frac{\log q_m}{\log m}\leq \frac{\log s + \log n_k}{\log s}\leq 1+ \frac{ \log n_k}{\log s} < \frac{\rho_2}{\rho_1-1}+1.
\]
So we have verified~\eqref{example_PC_ub} in both cases. 

Furthermore, let's remark for a further use
\begin{equation} \label{example_PC_lb}
q_m>m.
\end{equation}
Indeed, it follows from our construction that, for every $m\in\mathbb{N}$,
\[
q_{m+1}-q_m>1,
\]
and the claim follows by induction.

We proceed with proving that
\begin{equation} \label{example_PC_claim_alpha}
\text{ the sequence } (q_n) \text{ is } \alpha\text{-separated for } \alpha=1/\rho_1.
\end{equation}

So, let $m,n,s\in\mathbb{N}$ verifying $m<n$ and $s<m^5$. In case if, for some $k\in\mathbb{N}$, $q_m,q_n\in\mathcal{Q}_k$, then $q_m$ and $q_n$ are both divisible by $n_k$, so in case if, for some $t\in\mathbb{N},$ $|sq_m-tq_n|\geq 1$, we necessarily have
\[
|sq_m-tq_n|\geq n_k,
\]
hence
\[
|sq_m-tq_n|\geq q_m^{1/\rho_1}.
\]
To deal with the complimentary case, if $q_m\in\mathcal{Q}_k$, $q_n\in\mathcal{Q}_l$ for some $k<l$, note first that it could be deduced from the construction of the sequence $(q_m)_{m\in\mathbb{N}}$, that $q_m\geq m^{\rho_1}$. Then, we have
\[
|sq_m|<m^5 q_m < q_m^{6}\leq n_k^{6\rho_1}.
\]
So, for $n_k$ sufficiently large (otherwise speaking, for $m$ sufficiently large) and taking into account~\eqref{example_PC_assumption_rho_one_rho_two}, we have
\[
|sq_m|<\lfloor  n_k^{\rho_2}\rfloor/(2c)\leq n_{k+1}/2\leq q_n/2.
\]
Thus in this case we have that either $s=t=0$, in which case, of course, $sq_m-tq_n=0$, or
\[
|sq_m-tq_n|\geq q_m.
\]
This complets the verification of~\eqref{example_PC_claim_alpha}.
\end{example}

\begin{example} \label{example_gcd}
We continue to work in the framework of Example~\ref{example_PC}, now assuming in addition
$c\geq 2$  in~\eqref{choice_n_k_plus_1}.
We want to update the construction from that example to ensure, apart from polynomial growth and $\alpha$-separation, that also the term $E(N)$, defined in the statement of Theorem~\ref{mainTHM}, has a size that allows optimal result in Theorem~\ref{mainTHM}. More precisely, we are going to rectify Example~\ref{example_PC} above, by constructing a sequence $\widetilde{\cA}=(\widetilde{q}_t)_{t\in\N}$, so that we have
\begin{equation} \label{example_gcd_EN_result}
E(N)=\ \mathop{\sum\sum}_{1\leq m<n\leq N} (\widetilde{q}_m,\widetilde{q}_n)  \ \min\left(\frac{\psi(\widetilde{q}_m)}{\widetilde{q}_m},\frac{\psi(\widetilde{q}_n)}{\widetilde{q}_n}\right) =O\left(\sum_{n=1}^N\psi(\widetilde{q}_n)\right).
\end{equation}
Recall that we assume $c=2$. We choose all the numbers in the sequence $(n_k)_{k\in\N}$ to be prime, this is possible by Bertrand's postulate. 

Next, we modify sets $\cQ_k$, defined in~\eqref{def_cQk}, as follows:
\begin{equation} \label{def_cQk}
\widetilde{\cQ}_k:=\left\{ s\cdot n_k \mid s=1,\dots, \lfloor n_k^{\rho_1-1} \rfloor, \; s \text{ is prime} \rfloor \right\}.
\end{equation}
Then, similarly to Example~\ref{example_PC}, we define $\widetilde{\cA}=(\widetilde{q}_t)_{t\in\N}$ to be the set of numbers
\[
\bigcup_{k\in\N}\widetilde{\cQ}_k
\]
put in increasing order.

For the sake of comparison, let us denote by $\cA$ the set of denominators built as in Example~\ref{example_PC} using the same sequence of primes $(n_k)_{k\in\N}$ that we have just constructed for $\widetilde{\cA}$. Then, it follows from the definitions that the set of values of $\widetilde{\cA}$ is the subset of values of $\cA$, that is, for every index $t\in\N$ there exists an index $m\in\N$ such that $\widetilde{q}_t=q_m$. It follows then from the law of distribution of prime numbers that, for $t\in\N$ large enough, $t\asymp\frac{m}{\log m}$, hence, for every $\varepsilon>0$, we have, for all indices $t\in\N$ large enough,
\[
\frac{\log\widetilde{q}_t}{\log t}\leq \frac{\rho_2}{\rho_1-1}+1+\varepsilon,
\]
so $\left(\widetilde{q}_t\right)_{t\in\N}$ has a polynomial growth.

We proceed with establishing~\eqref{example_gcd_EN_result}. We claim that, for every $n\in\N$, we have
\begin{equation} \label{sum_gcd_O}
\sum_{m=1}^{n-1}\frac{(\widetilde{q}_m,\widetilde{q}_n)}{\widetilde{q}_n}=O(1),
\end{equation}
which clearly implies~\eqref{example_gcd_EN_result}.

By construction, $\widetilde{q}_n=s\cdot n_k$ for some $k\in\N$ and prime $s$ verifying $s\leq n_k^{\rho_1-1}$. Similarly, for all $m\in\N$,  $\widetilde{q}_m=s_1\cdot n_l$. Let us denote by $I_1$ the collection of indices $m<n$ such that $l=k$, and $I_2$ the complimentary set of indices $m<n$ such that $l<k$. Naturally, $\{1,\dots,n-1\}=I_1\sqcup I_2$, hence
\[
\sum_{m=1}^n\frac{(\widetilde{q}_m,\widetilde{q}_n)}{\widetilde{q}_n}=\sum_{m\in I_1}\frac{(\widetilde{q}_m,\widetilde{q}_n)}{\widetilde{q}_n}+\sum_{m\in I_2}\frac{(\widetilde{q}_m,\widetilde{q}_n)}{\widetilde{q}_n}.
\]
We are going to show that both sums in the right-hand side are $O(1)$, which is equivalent to proving~\eqref{sum_gcd_O}.

First, we consider the sum $\sum_{m\in I_1}$. In this case, by definition of $I_1$, $k=l$. Then, we necessarily have $s_1< s$, because $m<n$. Moreover, we have in this case
\[
(q_m,q_n)=(s_1 n_k, s n_k)=n_k,
\]
because $s_1$ and $s$ are two distinct primes. So,
\[
\sum_{m\in I_1}\frac{(\widetilde{q}_m,\widetilde{q}_n)}{\widetilde{q}_n}=\sum_{\substack{2\leq s_1<s \\ s_1 \text{ prime}
}}\frac{1}{s}<1.
\]

Next, we consider the sum $\sum_{m\in I_2}$. In this case, we have $l<k$, hence, by construction of sequences $(n_t)_{t\in\N}$ and $\widetilde{\cA}$, we necessarily have $\widetilde{q}_m\leq n_{k-1}^{\rho_1}\leq n_k^{\rho_1/\rho_2}$, hence $\widetilde{q}_n\geq n_k>\widetilde{q}_m^{\rho_2/\rho_1}$. We find thus
\[
\sum_{m\in I_2}\frac{(\widetilde{q}_m,\widetilde{q}_n)}{\widetilde{q}_n}\leq \sum_m\frac{\widetilde{q}_m}{\widetilde{q}_m^{\rho_2/\rho_1}}<+\infty,
\]
where at the last step we use~\eqref{example_PC_assumption_rho_one_rho_two}. This proves~\eqref{sum_gcd_O}, hence~\eqref{example_gcd_EN_result}.

\end{example}

\subsection{Khintchine Theorem on the set of Liouville numbers}

In this section, we establish the Khintchine-Sz\"usz theorem (for restraint denominators) on the set of Liouville numbers:
\[
\mathbb{L}=\left\{x \in \mathbb{R} \setminus \mathbb{Q} \;\; |  \; \; \forall \; n \in \mathbb{N} \;\; \exists \;\; q \in \mathbb{N}: \| q x \|<q^{-n} \right\}.
\]

In 2000 Bluhm~\cite{blum} constructed a Rajchman measure $\mu_{\mathbb{L}}$ supported on $\mathbb{L}$ showing that the set of Liouville numbers is an $M_0$ set. In 2002 Bugeaud~\cite{Bugeaund} calculated the exact decay rate of the Fourier transform $\widehat{\mu_{\mathbb{L}}}:$

\begin{equation}\label{decay_Liouville}
  |\widehat{\mu_{\mathbb{L}}}(t)| \le \exp \left\{-c_2 \sqrt{\log (1+ |t|)}\right\}, \quad t \in \Z,
\end{equation}
where $c_2$ is a positive absolute constant appearing in~\cite{Bugeaund}. We will use constant $c_2$ and measure $\widehat{\mu_{\mathbb{L}}}$ in the statement of Theorem~\ref{LiouvilleTHM} below).

This allows us to infer the following theorem.

\begin{thm} \label{LiouvilleTHM}
Let $\psi:\mathbb{N}\rightarrow \I$, $\gamma\in\I$, $\alpha\in(0,1)$ and let $\ca{A}= (q_n)_{n\in \N} $ be an $\alpha$-separated increasing sequence of natural numbers with growth rate
\begin{equation}\label{grow_rate_for_Liouv}
  \log q_n \ge \left(\frac{\rho}{c_2} \log n \right)^2, \quad n \ge n_0,
\end{equation}
for some natural $n_0$, $\rho > 2$ and $c_2$ from~\eqref{decay_Liouville}. Recall the measure $\widehat{\mu_{\mathbb{L}}}$ described before the statement of the theorem and appearing in~\eqref{decay_Liouville}.

Then, for any $\varepsilon>0$, we have the following asymptotic counting result for $\widehat{\mu_{\mathbb{L}}}:$-almost every Liouville number $x$:
\begin{equation*}
\begin{array}{ll}
 R(x,N)  \ = \   2\Psi(N)  + O\Big(\left(\Psi(N)+E(N)\right)^{1/2}\left(\log(\Psi(N)+E(N)+2\right)\big)^{2+\varepsilon}\Big),
\end{array}
\end{equation*}
where
\begin{equation*}
\Psi(N):=\sum_{n=1}^N\psi(q_n) \,
\end{equation*}
and
\begin{equation*}
E(N) \ := \ \mathop{\sum\sum}_{1\leq m<n\leq N} (q_m,q_n)  \ \min\left(\frac{\psi(q_m)}{q_m},\frac{\psi(q_n)}{q_n}\right).
\end{equation*}
\end{thm}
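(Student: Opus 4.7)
The plan is to derive Theorem~\ref{LiouvilleTHM} as a direct application of Theorem~\ref{mainTHM} to the Bluhm--Bugeaud measure $\mu_{\mathbb{L}}$. The required ingredients for invoking Theorem~\ref{mainTHM} are already collected just above the statement: $\mu_{\mathbb{L}}$ is a non-atomic probability Borel measure supported on $\mathbb{L}\subset\I$ (from Bluhm~\cite{blum}), the sequence $\ca{A}$ is $\alpha$-separated by hypothesis, and the inhomogeneous parameter $\gamma$ and approximation function $\psi$ are arbitrary just as in Theorem~\ref{mainTHM}. Therefore the only nontrivial task is to exhibit a monotonically decreasing function $h:\N\to\I$ verifying simultaneously the decay bound \eqref{growcond} for $\widehat{\mu_{\mathbb{L}}}$ and the growth-compatibility bound \eqref{growcond_f} relative to the sequence $\ca{A}$.

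Guided by Bugeaud's estimate \eqref{decay_Liouville}, the natural choice is
\[
h(t)\;:=\;\exp\!\left(-c_2\sqrt{\log(1+t)}\right),\qquad t\in\N.
\]
This function is manifestly positive and monotonically decreasing in $t$, and \eqref{decay_Liouville} rephrases exactly as $|\widehat{\mu_{\mathbb{L}}}(t)|\leq h(|t|)$ for every $t\in\Z$, so condition \eqref{growcond} holds. It remains to check \eqref{growcond_f}. For $n\geq n_0$ the growth hypothesis \eqref{grow_rate_for_Liouv} yields $\sqrt{\log q_n}\geq \tfrac{\rho}{c_2}\log n$, and hence
\[
h(q_n)\;\leq\;\exp\!\left(-c_2\sqrt{\log q_n}\right)\;\leq\;\exp\!\left(-\rho\log n\right)\;=\;n^{-\rho}.
\]
Thus $h(q_n)=O(n^{-\rho})$ with the very same $\rho>2$ supplied in the hypothesis, and \eqref{growcond_f} is established.

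With all hypotheses of Theorem~\ref{mainTHM} satisfied for $\mu=\mu_{\mathbb{L}}$ and the chosen $h$, the conclusion \eqref{countFSPresultsv} of Theorem~\ref{mainTHM} applies verbatim for $\mu_{\mathbb{L}}$-almost every $x\in\mathbb{L}$, which is exactly the asymptotic counting statement of Theorem~\ref{LiouvilleTHM}. The only conceivable obstacle in this argument is the verification that the quantitative form of Bugeaud's bound is genuinely compatible with the prescribed growth of $\ca{A}$ in the form demanded by Theorem~\ref{mainTHM}; but because the growth condition \eqref{grow_rate_for_Liouv} is stated precisely so as to invert the square-root-logarithm decay against $n^{-\rho}$, this compatibility is transparent and no further work is needed.
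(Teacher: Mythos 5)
Your proposal is correct and is essentially identical to the paper's own proof: both choose $h(t)=\exp\{-c_2\sqrt{\log(1+t)}\}$ and reduce the theorem to checking conditions \eqref{growcond} and \eqref{growcond_f} of Theorem~\ref{mainTHM} via \eqref{decay_Liouville} and \eqref{grow_rate_for_Liouv}. In fact you carry out the inversion $h(q_n)\leq\exp(-c_2\sqrt{\log q_n})\leq n^{-\rho}$ explicitly, which the paper leaves implicit.
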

\begin{proof}[Proof of Theorem~\ref{LiouvilleTHM}]
We will deduce Theorem~\ref{LiouvilleTHM} from Theorem~\ref{mainTHM}. In order to do this we put
$$h(t)=\exp \left\{-c_2 \sqrt{\log (1+ t)}\right\}, \quad t \in \N.$$
Conditions \eqref{growcond_f} and \eqref{growcond} of Theorem~\ref{mainTHM} now follow from~\eqref{decay_Liouville} and~\eqref{grow_rate_for_Liouv}, therefore all statements of Theorem~\ref{LiouvilleTHM} now follow from the corresponding statements of Theorem~\ref{mainTHM}.
\end{proof}

Consider the following two examples of sequences $\cA$. We will use these examples in Corollary~\ref{cor_Liouville} below.

\begin{enumerate}
\item 
Let $\cS$
be a finite set of $k$ distinct primes $p_1,\ldots, p_k$. 
It is shown in~\cite{povezazo} that any sequece of $\cS$-smooth numbers $\cA_1$ is an $\alpha$-separated sequence, for any $\alpha \in (0,1)$. It is also shown there that $\cA_1$ has a growth rate
\[
\log q_n > \frac{\log 2}{2} n^{\frac{1}{k}}, \quad n \ge 2,
\]
and has the gcd error term $E(N)$ of order $E(N)=O(\Psi(N))$ (see Theorem~5 in~\cite{povezazo}).

\item 
Let $\ca{A}_2$ be a subsequence of sequence $\widetilde{\ca{A}}$ verifying
the growth condition \eqref{grow_rate_for_Liouv}. 
Since $\widetilde{\ca{A}}$ is an $\alpha$-separated sequence and verifies
\eqref{example_gcd_EN_result}, necessarily its subsequence $\ca{A}_2$ is also $\alpha$-separated and verifies \eqref{example_gcd_EN_result}.
\end{enumerate}

The following corollary is readily implied by Theorem~\ref{LiouvilleTHM} and properties of sequences $\ca{A}_1$ and $\ca{A}_2.$

\begin{cor}\label{cor_Liouville}
Let $\mathbb{L}$ be the set of Liouville numbers with Rajchman measure $\mu_{\mathbb{L}}$ supported on it and verifying~\eqref{decay_Liouville}. 
Then, for any $\gamma \in \I$ and constructed above sequences $\ca{A}_1$ and $\ca{A}_2$, we have
  $$ \mu_{\mathbb{L}}\big(W_{\ca{A}_i}(\gamma; \psi) \cap \mathbb{L} \big) =\left\{
\begin{array}{ll}
  0 & \mbox{if} \;\;\; \sum\limits_{n=1}^{\infty}\psi(q_n)  <\infty\;
      ,\\[2ex]
  1 & \mbox{if} \;\;\; \sum\limits_{n=1}^{\infty}\psi(q_n)
      =\infty \; , \quad \quad i=1,2.
\end{array}\right.$$
\end{cor}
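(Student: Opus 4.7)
The plan is to invoke Theorem~\ref{LiouvilleTHM} for the divergence half of the dichotomy and a direct Borel--Cantelli argument for the convergence half. The first task is verifying the hypotheses of Theorem~\ref{LiouvilleTHM} for both sequences: both $\ca{A}_1$ and $\ca{A}_2$ are $\alpha$-separated by the properties recalled immediately before the statement of Corollary~\ref{cor_Liouville}; for $\ca{A}_2$ the growth condition~\eqref{grow_rate_for_Liouv} holds by construction, while for $\ca{A}_1$ the stronger bound $\log q_n \geq (\log 2)\, n^{1/k}/2$ dominates $(\rho/c_2)^2(\log n)^2$ for $n$ sufficiently large. Moreover, both sequences satisfy $E(N)=O(\Psi(N))$: for $\ca{A}_1$ this is quoted from \cite{povezazo}, and for $\ca{A}_2$ it is inherited, upon passing to a subsequence, from the uniform bound $\sum_{m<n}(\widetilde{q}_m,\widetilde{q}_n)/\widetilde{q}_n = O(1)$ proved in Example~\ref{example_gcd}.

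For the divergence case $\sum_n \psi(q_n)=\infty$, we have $\Psi(N)\to\infty$, hence $E(N)=O(\Psi(N))=O(\Psi(N)^{2-\varepsilon})$ for any $\varepsilon\in(0,1)$, fulfilling the hypothesis of Remark~\ref{remassympt}. That remark then gives $R(x,N)/(2\Psi(N))\to 1$ for $\mu_{\mathbb{L}}$-a.e.\ $x$, whence in particular $R(x,N)\to\infty$ and Proposition~\ref{prop1}(i) yields $x \in W_{\ca{A}_i}(\gamma;\psi)$. Since $\mu_{\mathbb{L}}$ is supported on $\mathbb{L}$, this delivers $\mu_{\mathbb{L}}\big(W_{\ca{A}_i}(\gamma;\psi)\cap\mathbb{L}\big)=1$.

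For the convergence case $\sum_n\psi(q_n)<\infty$, I would expand the $1$-periodic indicator of $[-\psi(q_n),\psi(q_n)]$ in a Fourier series and evaluate at $q_nx-\gamma$, obtaining the standard estimate
\[
\mu_{\mathbb{L}}\big(E(q_n,\gamma,\psi)\big) \ \leq\ 2\psi(q_n) + \frac{2}{\pi}\sum_{k\geq 1}\frac{|\widehat{\mu_{\mathbb{L}}}(kq_n)|}{k}.
\]
Splitting the inner sum at $k=q_n$ and using~\eqref{decay_Liouville} together with the elementary convergence $\int_1^\infty e^{-c_2\sqrt{\log t}}/t\,dt<\infty$ (substitute $u=\sqrt{\log t}$), one obtains the uniform bound $O\!\big(\log q_n\cdot e^{-c_2\sqrt{\log q_n}}\big)$. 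Under the growth rates of $\ca{A}_1$ and $\ca{A}_2$ this error is summable in $n$ (for $\ca{A}_2$ it is $O(\log n / n^\rho)$ with $\rho>2$; for $\ca{A}_1$ it decays even faster). Combined with $\sum_n\psi(q_n)<\infty$, the Borel--Cantelli lemma yields $\mu_{\mathbb{L}}\big(\limsup_n E(q_n,\gamma,\psi)\big)=0$, i.e., $\mu_{\mathbb{L}}\big(W_{\ca{A}_i}(\gamma;\psi)\cap\mathbb{L}\big)=0$. The main technical point is the controlled splitting of the Fourier error against the sub-exponential decay of~\eqref{decay_Liouville}; the divergence half is then a clean corollary of the counting asymptotics.
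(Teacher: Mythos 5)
Your proposal is correct, and its divergence half follows exactly the paper's intended route: the paper offers no written proof, asserting that the corollary is ``readily implied'' by Theorem~\ref{LiouvilleTHM} together with the recalled properties of $\ca{A}_1$ and $\ca{A}_2$, and your verification of the hypotheses (including the observation that $\alpha$-separation and the bound $E(N)=O(\Psi(N))$ pass to subsequences, and that $n^{1/k}$ eventually dominates $(\log n)^2$ for $\ca{A}_1$) is precisely what is being left to the reader. Where you diverge is the convergence half. The paper's economical route is to note that when $\sum_n\psi(q_n)<\infty$ and $E(N)=O(\Psi(N))$, both $\Psi(N)$ and $E(N)$ stay bounded, so the counting formula of Theorem~\ref{LiouvilleTHM} forces $R(x,N)=O(1)$ for $\mu_{\mathbb{L}}$-a.e.\ $x$ and Proposition~\ref{prop1}(i) gives measure zero; alternatively one cites the convergence part of \cite[Theorem~2]{povezazo}, whose hypothesis \eqref{cond2} is exactly the summability you establish. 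Your direct Fourier-plus-Borel--Cantelli argument is more self-contained and makes the dependence on the decay rate \eqref{decay_Liouville} and the growth rates explicit, which is a genuine gain in transparency; the one presentational caveat is that the sharp indicator of $\{\|qx-\gamma\|\le\psi(q)\}$ does not have an absolutely convergent Fourier series, so the inequality you call ``standard'' should be obtained by majorizing with the continuous function $W^{+}_{q,\gamma,1}$ of \eqref{Wdef} and using \eqref{W_ub_psi}--\eqref{W_ub_1s2} (which yields $\min(3\psi(q),\pi^{-2}s^{-2}\psi(q)^{-1})\le\sqrt{3}/(\pi s)$ and hence your bound up to constants). This is a matter of rigour in one line, not a gap in the argument.
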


\subsection{Hausdorff and Fourier dimensions of sets $W_{\ca{A}}(\gamma; \psi)$ and their intersections }

In this section we deduce a nontrivial result on the Hausdorff and Fourier dimensions of some subsets of the sets $W_{\ca{A}}(\gamma; \psi)$.

Recall that the Fourier dimension of a set $A \subset \R$ is a number $\dim_F A,$ that is a supremum
of all $\eta \in [0,1]$ such that
\begin{equation}\label{dimfurdef}
  |\widehat{\mu}(t)| = O\left(|t|^{-\frac{\eta}{2}}\right), \quad \text{as} \quad |t| \to \infty,
\end{equation}
for some Borel non-atomic probability measure $\mu$ on $\R$ with $\supp(\mu) \subset A.$ More precisely, the supremum is
taken over all Borel non-atomic probability measures, supported on A and satisfying \eqref{dimfurdef}.

It is known (see, for example,~\cite{Mattila1,Mattila2}) that for any Borel set $A \subset \R$
\begin{equation}\label{hasfurineq}
  \dim_F A \le \dim_H A.
\end{equation}
A set $A \subset \R,$ for which $\dim_F A = \dim_H A,$ is called a Salem set.

For the set of inhomogeneous $\psi$-well approximable real numbers $W_{\ca{A}}(\gamma; \psi)$ an upper bound of the Hausdorff
dimension is known (see, for example,~\cite{Hambrook}):
\begin{equation}\label{hausdupperbound}
  \dim_H W_{\ca{A}}(\gamma; \psi) \le \min\left\{\tau(\ca{A}, \psi), 1\right\},
\end{equation}
where
\begin{equation}\label{const_in_upper_h_bound}
  \tau(\ca{A}, \psi)= \inf \left\{\eta \ge 0 \,\, : \,\, \sum_{q \in \ca{A}} q \left(\frac{\psi(q)}{q}\right)^{\eta}<\infty\right\}.
\end{equation}

The next theorem gives a nontrivial result about the Fourier dimension of the set of inhomogeneous $\psi$-well approximable real numbers lying inside
some other set $A$ and follows from Theorem~\ref{mainTHM}.
\begin{thm}\label{THM_four_dim_inters}
Let $A \subset \R$ be a Borel set and let $\dim_F A = d \ne 0$. Let $\alpha \in (0,1)$, and let $\ca{A}= (q_n)_{n\in \N} $  be an increasing $\alpha$-separated sequence of natural numbers.
Assume that, for some $\delta>0$ and $n_0 \in \N$, the sequence $\ca{A}$ has the growth rate
\begin{equation}\label{gr_rate_dim_inters}
  q_n \ge n^{\frac{4}{d}+\delta}, \quad n \ge n_0.
\end{equation}
Let $\psi:\mathbb{N}\rightarrow \I$ be an approximation function.
Assume that
\begin{equation} \label{diverg_cond}
\Psi(N):=\sum_{n=1}^N\psi(q_n) \to \infty \quad \text{as} \quad N \to \infty.
\end{equation}
Then, for any $\gamma \in \I$,
$$\dim_F \left(W_{\ca{A}}(\gamma; \psi) \cap A \right) =d.$$
\end{thm}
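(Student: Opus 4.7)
The plan is to prove both directions of the equality $\dim_F(W_\cA(\gamma;\psi) \cap A) = d$.

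The upper bound $\leq d$ follows immediately from monotonicity of Fourier dimension: any Borel non-atomic probability measure with support in $W_\cA(\gamma;\psi) \cap A$ has support in $A$, so $\dim_F(W_\cA(\gamma;\psi) \cap A) \le \dim_F A = d$.

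For the lower bound $\ge d$, I fix any $\eta \in \left(\frac{4d}{4+\delta d},\, d\right)$; this interval is non-empty since $d>0$. My goal is to exhibit a Borel non-atomic probability measure concentrated on $W_\cA(\gamma;\psi) \cap A$ with $|\hat\mu(t)| = O(|t|^{-\eta/2})$, which gives $\dim_F(W_\cA(\gamma;\psi) \cap A) \ge \eta$; letting $\eta \to d^{-}$ then concludes. By the definition of $\dim_F A = d$, such a measure $\mu$ exists with $\supp(\mu) \subset A$ (translating if necessary, we may assume $\supp(\mu) \subset \I$). Setting $h(t) := \min(1, C|t|^{-\eta/2})$ for a suitable $C>0$ gives a monotonically decreasing $h:\N \to \I$ with $|\hat\mu(t)| = O(h(|t|))$, and the growth hypothesis~\eqref{gr_rate_dim_inters} yields $h(q_n) = O(n^{-\rho})$ with $\rho = \frac{\eta}{2}\bigl(\frac{4}{d}+\delta\bigr) > 2$ by the choice of $\eta$. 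Hence the hypotheses of Theorem~\ref{mainTHM} are satisfied.

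To guarantee $\mu(W_\cA(\gamma;\psi) \cap A) = 1$ --- equivalently, by Proposition~\ref{prop1}(i), $R(x,N) \to \infty$ for $\mu$-almost every $x$ --- I appeal to Remark~\ref{remassympt}: beyond the assumed divergence $\Psi(N) \to \infty$, one needs $E(N) = O(\Psi(N)^{2-\varepsilon})$ for some $\varepsilon>0$, which is not automatic under the theorem's hypotheses. The plan is to bypass this by replacing $\cA$ with a sufficiently sparse subsequence $\cA' = (q_{n_k})$: any subsequence of an $\alpha$-separated sequence is itself $\alpha$-separated (in Definition~\ref{def2}, the restriction $s \leq k^{5}$ is more stringent than $s \leq n_{k}^{5}$) and inherits the polynomial growth~\eqref{gr_rate_dim_inters}. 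A greedy extraction exploiting the $\alpha$-separation --- which forces, for $m<n$ large, either $(q_m,q_n) \geq q_m^{\alpha}$ or $(q_m,q_n) < q_n/m^{5}$ --- together with the polynomial lower bound $q_n \geq n^{4/d+\delta}$ should yield indices $(n_k)$ for which $\Psi'(N) := \sum_{k \leq N} \psi(q_{n_k}) \to \infty$ while the corresponding gcd term satisfies $E'(N) = O(\Psi'(N)^{2-\varepsilon'})$. Since $W_{\cA'}(\gamma;\psi) \subset W_\cA(\gamma;\psi)$, applying Theorem~\ref{mainTHM} to $\mu$ and $\cA'$ forces $\mu(W_\cA(\gamma;\psi) \cap A) \geq \mu(W_{\cA'}(\gamma;\psi) \cap A) = 1$, and the measure $\mu$ then certifies $\dim_F(W_\cA(\gamma;\psi) \cap A) \geq \eta$ as required.

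The hard part will be the subsequence extraction delivering $E'(N) = O(\Psi'(N)^{2-\varepsilon'})$: the construction must simultaneously keep $\sum_k \psi(q_{n_k})$ divergent and make the gcd-weighted cross-sums tame, and both the $\alpha$-separation dichotomy above and the polynomial growth~\eqref{gr_rate_dim_inters} are essential for controlling divisor contributions. A minor secondary point is the transition from measure-theoretic concentration $\mu(W_\cA(\gamma;\psi) \cap A) = 1$ to the topological support condition $\supp(\mu) \subset W_\cA(\gamma;\psi) \cap A$ in the definition of $\dim_F$; this is routine, either via an equivalent measure-theoretic formulation of Fourier dimension or by an inner-regularity argument applied to a suitable $F_\sigma$ exhaustion of $W_\cA(\gamma;\psi) \cap A$.
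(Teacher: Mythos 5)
Your overall strategy coincides with the paper's: the upper bound by monotonicity of $\dim_F$, and the lower bound by taking, for each admissible $\eta$ (resp.\ $\varepsilon$), a measure on $A$ witnessing $\dim_F A=d$, checking via the growth rate~\eqref{gr_rate_dim_inters} that the hypotheses of Theorem~\ref{mainTHM} hold (your parameter bookkeeping $\eta>\frac{4d}{4+\delta d}$ is equivalent to the paper's $\tilde\varepsilon<\frac{d^2\delta}{8+2d\delta}$), deducing $\mu(W_{\ca{A}}(\gamma;\psi)\cap A)=1$, and then restricting the measure to $B=W_{\ca{A}}(\gamma;\psi)\cap A$ to certify the Fourier decay on the intersection. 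Up to and including that restriction step your argument matches the paper's.

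The genuine gap is in the middle step, and it is the step you yourself flag as ``the hard part'': the passage from~\eqref{countFSPresultsv} and~\eqref{diverg_cond} to $R(x,N)\to\infty$. The paper makes this deduction directly, implicitly treating the error term in~\eqref{countFSPresultsv} as $o(\Psi(N))$; you correctly observe that this requires control of $E(N)$ (roughly $E(N)=O(\Psi(N)^{2-\varepsilon})$, as in Remark~\ref{remassympt}), and you propose to secure it by extracting a subsequence $\ca{A}'$ with $\Psi'(N)\to\infty$ and $E'(N)=O(\Psi'(N)^{2-\varepsilon'})$. But that extraction is never carried out, and it is not routine: the natural way to kill the gcd sum is to make the subsequence very sparse (e.g.\ $q_{n_{k+1}}\geq q_{n_k}^2$ gives $E'(N)=O(\Psi'(N))$ outright), yet sparsification fights directly against keeping $\sum_k\psi(q_{n_k})$ divergent --- already for $\psi(q_n)=1/n$ no geometrically sparse subsequence retains divergence --- and the dichotomy you extract from $\alpha$-separation does not by itself bound $(q_m,q_n)$ (the separation condition constrains $|sq_m-tq_n|$ for small $s$, not the gcd). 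So the load-bearing lemma of your route is missing, and it is not clear it can be proved in the generality you need. Since this is exactly the point where the paper's own argument is thinnest, the honest comparison is: you have correctly isolated the delicate step, but your proposed repair is a plan rather than a proof, and as written the lower bound $\dim_F(W_{\ca{A}}(\gamma;\psi)\cap A)\geq d$ is not established. (Your closing remark about $\mu(B)=1$ versus $\supp(\mu)\subset B$ is a fair secondary point --- the paper's $\mu_B$ has support only in $\overline{B}$ --- but it is indeed minor next to the main gap.)
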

\begin{proof}[Proof of Theorem~\ref{THM_four_dim_inters}]
Firstly note that the definition of the Fourier dimension implies that for any $A_1, A_2 \subset \R$
$$\dim_F \left(A_1 \cap A_2\right) \le \min \left\{\dim_F A_1, \dim_F A_2\right\}.$$
Therefore,
$$\dim_F \left(W_{\ca{A}}(\gamma; \psi) \cap A \right) \le \dim_F A=d.$$
So, in order to prove Theorem~\ref{THM_four_dim_inters}, we need to show that
\begin{equation}\label{fou_dim_ge_d}
  \dim_F \left(W_{\ca{A}}(\gamma; \psi) \cap A \right) \ge d,
\end{equation}
which means that the intersection $W_{\ca{A}}(\gamma; \psi) \cap A$ supports some Borel non-atomic probability measure
with a suitable decay rate of its Fourier transform.

Let us fix a positive number $\widetilde{\varepsilon} < \frac{d^2 \delta}{8+2d\delta}.$
  Since $\dim_F A = d \ne 0,$ then, for any $\varepsilon \in (0, \tilde{\varepsilon})$,
  there exists a non-atomic probability measure $\mu_A^{(\varepsilon)},$ supported on $A$ and satisfying
  \begin{equation}\label{dimfurdef_thm}
  |\widehat{\mu_A^{(\varepsilon)}}(t)| = O\left(|t|^{-\frac{d}{2} + \varepsilon}\right), \quad \text{as} \quad |t| \to \infty.
\end{equation}
 Put
  $$h(t)=t^{-\frac{d}{2} + \widetilde{\varepsilon}}, \quad t \in \N.$$
  Now, for any $\varepsilon \in (0, \tilde{\varepsilon})$ we can apply Theorem~\ref{mainTHM}, choosing $\mu=\mu_A^{(\varepsilon)},$ $F=A$ and $h$ as above. Conditions \eqref{growcond_f} and \eqref{growcond} of Theorem~\ref{mainTHM}
  follow from~\eqref{dimfurdef_thm} and \eqref{gr_rate_dim_inters} respectively. By Theorem~\ref{mainTHM}, we get that, for any $\varepsilon \in (0, \tilde{\varepsilon})$ and for $\mu_A^{(\varepsilon)}$-almost all $x \in A$,
  equality~\eqref{countFSPresultsv} holds true. Taking into account~\eqref{diverg_cond}, from~\eqref{countFSPresultsv} we get that
  $$R(x, N) \to \infty \; \text{ as } N \to \infty, \quad \text{for} \,\, \mu_A^{(\varepsilon)} \text{-almost all} \,\, x \in A,$$
  so for $\mu_A^{(\varepsilon)}$-almost all $x \in A$ we have $x \in W_{\ca{A}}(\gamma; \psi).$
  So, for any $\varepsilon \in (0, \tilde{\varepsilon})$, 
we have
  $$\mu_A^{(\varepsilon)}\left(W_{\ca{A}}(\gamma; \psi) \cap A \right)=1.$$
  For any $\varepsilon \in (0, \tilde{\varepsilon})$ consider a measure space $(\R, \mathfrak{B}(\R), \mu_A^{(\varepsilon)}).$
  Put $B=W_{\ca{A}}(\gamma; \psi) \cap A.$ Now we modify measure $\mu_A^{(\varepsilon)}$ introducing a new Borel measure
  $$\mu_B^{(\varepsilon)}(C) := \mu_A^{(\varepsilon)} (C \cap B), \quad \forall C \in \mathfrak{B}(\R).$$
  Observing the full measure statement $\mu_A^{(\varepsilon)}(B)=1$ and the fact that $\mu_A^{(\varepsilon)}$ is non-atomic and probability
  we deduce that new measure space $(\R, \mathfrak{B}(\R), \mu_B^{(\varepsilon)})$ is a probability space with a non-atomic measure $\mu_B^{(\varepsilon)}.$ From the definition of measures $\mu_B^{(\varepsilon)}$  it follows that
  $$\supp(\mu_B^{(\varepsilon)}) \subset B, \quad \forall\varepsilon \in (0, \tilde{\varepsilon}).$$
  Moreover, for any $t \in \Z$ and $\varepsilon \in (0, \tilde{\varepsilon})$
  $$\int_{\R}   e^{-2\pi it x}  \, \mathrm{d}\mu_A^{(\varepsilon)} (x) = \int_{B} + \int_{\R \setminus B}=
  \int_{B}   e^{-2\pi it x}  \, \mathrm{d}\mu_A^{(\varepsilon)} (x) = $$
  $$= \int_{B}   e^{-2\pi it x}  \, \mathrm{d}\mu_B^{(\varepsilon)} (x)= \int_{B} + \int_{\R \setminus B}=
  \int_{\R}   e^{-2\pi it x}  \, \mathrm{d}\mu_B^{(\varepsilon)} (x),$$
  so $\mu_B^{(\varepsilon)}$ has the same decay rate of Fourier transform, as $\mu_A^{(\varepsilon)}.$
  Therefore, observing \eqref{dimfurdef_thm} with $\mu_B^{(\varepsilon)}$ instead of $\mu_A^{(\varepsilon)},$
  $$\dim_F \left(W_{\ca{A}}(\gamma; \psi) \cap A \right) \ge d,$$
  so \eqref{fou_dim_ge_d} holds. This finishes the proof of Theorem~\ref{THM_four_dim_inters}.
\end{proof}

The following two corollaries immediately follows from Theorem~\ref{THM_four_dim_inters}
and inequalities
\begin{equation}\label{ineq_HFdimen_inters}
 \dim_F \left(A \cap W \right) \le \dim_H \left(A \cap W \right) \le \min \left\{\dim_H A, \dim_H W\right\} \le \dim_H A.
\end{equation}
In the above inequalities~\eqref{ineq_HFdimen_inters} sets $A$ and $W$ are two Borel subsets of $\R,$ these inequalities follow from~\eqref{hasfurineq} and
definitions of Hausdorff and Fourier dimensions.

\begin{cor}\label{cor_inter_is_salem}
  If, under all conditions of Theorem~\ref{THM_four_dim_inters}, we also have that
  $A$ is a Salem set, then the intersection $W_{\ca{A}}(\gamma; \psi) \cap A$
  is also a Salem set and
  $$\dim_H \left(W_{\ca{A}}(\gamma; \psi) \cap A\right) = \dim_F \left(W_{\ca{A}}(\gamma; \psi) \cap A\right)=d.$$
  \end{cor}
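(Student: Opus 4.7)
The plan is to sandwich $\dim_H\left(W_{\ca{A}}(\gamma;\psi)\cap A\right)$ between two values that both equal $d$, and then to observe that the Fourier dimension of the intersection has already been computed in Theorem~\ref{THM_four_dim_inters}. There is essentially no new analytic input: the result is a purely formal consequence of the inequalities~\eqref{ineq_HFdimen_inters}, of Theorem~\ref{THM_four_dim_inters}, and of the Salem hypothesis on $A$.

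More precisely, first I would apply Theorem~\ref{THM_four_dim_inters} (whose hypotheses are assumed to hold here) to obtain
\[
\dim_F\left(W_{\ca{A}}(\gamma;\psi)\cap A\right) = d.
\]
Next, I would invoke the chain of inequalities~\eqref{ineq_HFdimen_inters} applied to $W=W_{\ca{A}}(\gamma;\psi)$ and to our set $A$:
\[
\dim_F\left(A\cap W\right) \le \dim_H\left(A\cap W\right) \le \dim_H A.
\]
Since $A$ is assumed to be a Salem set, we have by definition $\dim_H A=\dim_F A = d$. Combining these,
\[
d \; = \; \dim_F\left(W_{\ca{A}}(\gamma;\psi)\cap A\right) \; \le \; \dim_H\left(W_{\ca{A}}(\gamma;\psi)\cap A\right) \; \le \; \dim_H A \; = \; d,
\]
so equality holds throughout. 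In particular,
\[
\dim_H\left(W_{\ca{A}}(\gamma;\psi)\cap A\right) \; = \; \dim_F\left(W_{\ca{A}}(\gamma;\psi)\cap A\right) \; = \; d,
\]
which is precisely the assertion that $W_{\ca{A}}(\gamma;\psi)\cap A$ is a Salem set of dimension $d$.

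There is no real obstacle: the whole content of the statement is that Theorem~\ref{THM_four_dim_inters} delivers the Fourier dimension lower bound, while the Salem assumption on $A$ provides the matching Hausdorff dimension upper bound, and the general inequality $\dim_F\leq \dim_H$ forces the two to coincide on the intersection. All the serious work has already been done inside Theorem~\ref{mainTHM} and Theorem~\ref{THM_four_dim_inters}, which furnished the non-atomic probability measure on $W_{\ca{A}}(\gamma;\psi)\cap A$ with the required Fourier decay.
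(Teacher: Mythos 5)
Your proof is correct and is exactly the route the paper takes: the authors state that the corollary follows immediately from Theorem~\ref{THM_four_dim_inters} together with the inequalities~\eqref{ineq_HFdimen_inters}, and your write-up simply spells out that sandwich argument using the Salem hypothesis $\dim_H A=\dim_F A=d$.
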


\begin{rem}\label{rem_salem}
In view of Corollary~\ref{cor_inter_is_salem}, for any given $d \in (0,1]$ we can build Salem subsets $W$ of the set of inhomogeneous $\psi$-well approximable real numbers $W_{\ca{A}}(\gamma; \psi)$ with $\dim_H W=d$.
  For example, in \cite{Hambrook} Hambrook proved that if the denominators sequence $\ca{A}_1$
  satisfies
  $$\sum_{q \in \ca{A}_1} \frac{1}{q}= \infty,$$
  and for the approximation function $\psi_1$ we have
  \begin{equation}\label{order_of_approx_func}
    \liminf_{M \to \infty} \frac{- \log \psi_1 (M)}{\log M} = \limsup_{M \to \infty} \frac{- \log \psi_1 (M)}{\log M} = \lambda,
  \end{equation}
  then for any $\gamma_1 \in \I$ the set $W_1=W_{\ca{A}_1}(\gamma_1; \psi_1)$ is Salem with
\begin{equation} \label{dimW_1}
\dim_H W_1= \dim_F W_1 = \min\{\frac{2}{1+\lambda}, 1\}.
\end{equation}
  So, the subset $W=W_{\ca{A}}(\gamma; \psi) \cap W_{\ca{A}_1}(\gamma_1; \psi_1),$ whose elements are both $\psi$ and $\psi_1$-well approximable (with different denominators sequences), is a Salem set. Of course, this readily extends to finite intersections of the sets of well-approximale numbers.

\end{rem}

\begin{cor}\label{cor_int_not_salem}
Let $\alpha \in (0,1)$, and let $\ca{A}= (q_n)_{n\in \N} $ be an increasing $\alpha$-separated sequence of natural numbers with a growth rate
\begin{equation} \label{q_n_gr_base}
q_n\geq n^{4+\delta}
\end{equation}
for some $\delta>0$. Assume that approximation function $\psi$ verifies~\eqref{diverg_cond}. Then,
\[
\dim_H W_{\ca{A}}(\gamma; \psi) = 1.
\]
\end{cor}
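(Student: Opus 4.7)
My plan is to deduce Corollary~\ref{cor_int_not_salem} as a direct application of Theorem~\ref{THM_four_dim_inters}, taking the unit interval $\I$ itself as the ambient Borel set. The first step is to observe that $\dim_F \I = 1$: Lebesgue measure on $\I$ is a non-atomic probability measure whose Fourier transform satisfies $|\widehat{m}(t)| = O(|t|^{-1})$, so every exponent $\eta \in [0,1]$ is admissible in~\eqref{dimfurdef}, giving $\dim_F \I = 1$.

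Next, I would invoke Theorem~\ref{THM_four_dim_inters} with $A = \I$ and $d = 1$. The growth hypothesis~\eqref{gr_rate_dim_inters} then specialises to $q_n \geq n^{4 + \delta}$, which coincides with our assumption~\eqref{q_n_gr_base}. The $\alpha$-separation of $\cA$ and the divergence condition~\eqref{diverg_cond} are also directly inherited from the hypotheses of the corollary, so all the conditions of Theorem~\ref{THM_four_dim_inters} are satisfied. The theorem then yields
\[
\dim_F\bigl( W_{\cA}(\gamma; \psi) \cap \I \bigr) \;=\; 1.
\]
Since $W_{\cA}(\gamma; \psi) \subset \I$ by the very definition~\eqref{wellset}, the intersection is redundant and we conclude $\dim_F W_{\cA}(\gamma; \psi) = 1$.

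Finally, the inequality~\eqref{hasfurineq} relating the Fourier and Hausdorff dimensions gives
\[
\dim_H W_{\cA}(\gamma; \psi) \;\geq\; \dim_F W_{\cA}(\gamma; \psi) \;=\; 1,
\]
while the containment $W_{\cA}(\gamma; \psi) \subset \I$ yields the trivial upper bound $\dim_H W_{\cA}(\gamma; \psi) \leq 1$. Combining these two inequalities proves $\dim_H W_{\cA}(\gamma; \psi) = 1$.

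There is essentially no obstacle to overcome: all the real work has already been done in Theorem~\ref{THM_four_dim_inters}, and the critical case $d=1$ is admissible there since the constraint $\widetilde{\varepsilon} < \tfrac{d^{2}\delta}{8 + 2d\delta}$ remains strictly positive when $d = 1$ (namely, $\widetilde{\varepsilon} < \tfrac{\delta}{8+2\delta}$). The only conceptual point worth flagging is simply recognising that the ``Salem set of Fourier dimension $1$'' required as input to Theorem~\ref{THM_four_dim_inters} can be taken to be the ambient interval $\I$ itself, carried by Lebesgue measure.
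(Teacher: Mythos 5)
Your proof is correct and follows essentially the same route as the paper: both deduce the corollary by applying Theorem~\ref{THM_four_dim_inters} to a set of Fourier dimension $1$ (for which the growth condition~\eqref{gr_rate_dim_inters} specialises to~\eqref{q_n_gr_base}) and then invoking $\dim_F \le \dim_H$. The only difference is the choice of witness set: the paper takes $A$ to be Hambrook's Salem set $W_1$ from~\eqref{dimW_1} with $\lambda=1$, whereas you take $A=\I$ carried by Lebesgue measure --- a slightly more economical choice, and in fact the same one the paper itself uses in the proof of Corollary~\ref{THM_four_dim_inters_MT_2}.
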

\begin{proof}
It is well-known that for every $d\in(0,1]$ there exists a Salem set with Fourier dimension $d$. For example, one could use the set $W_1$ mentioned in~\eqref{dimW_1} with $\lambda=2/d-1$. Then, the result follows from Theorem~\ref{THM_four_dim_inters} with $A=W_1$ (and the use of inequalities~\eqref{ineq_HFdimen_inters}).
\end{proof}

\begin{rem}\label{dim_is_1}
  Corollary~\ref{cor_int_not_salem} provides an inhomogeneous Khintchine theorem for a wide range of sequences of restraint denominators (more precisely, it provides the divergence case of Khintchine's theorem, but the convergence case always readily follows from classical Borel-Cantelli lemma).
  Homogeneous version of this result was previously established by Rynne~\cite{Rynne}. He shows that
  $$\dim_H W_{\ca{A}}(0; \psi)= \min\left\{\tau(\ca{A}, \psi), 1\right\},$$
  where $\tau(\ca{A}, \psi)$ is defined by~\eqref{const_in_upper_h_bound}. Note that~\eqref{diverg_cond} implies $\tau(\ca{A}, \psi) \ge 1,$
  so in homogeneous case, under~\eqref{diverg_cond}, we have $\dim_H W_{\ca{A}}(0; \psi)=1.$ Inhomogeneous case is not so well studied. The existing results cover $\ca{A}= \N$ and, more generally, not too sparse subsets of $\N$ (verifying the condition $\sum_{q \in \ca{A}_1} \frac{1}{q}= \infty$). Corollary~\ref{cor_int_not_salem}, complimentarily,  gives inhomogeneous results for sufficiently sparse denominators sequence (verifying~\eqref{gr_rate_dim_inters}). 

\end{rem}

Of course, it would be very desirable to extend the results of Theorem~\ref{THM_four_dim_inters} beyond the divergence condition~\eqref{diverg_cond}. This could be done, at least partially, by using a powerful Mass Transference Principle. Theorem~\ref{MTP} below states a particular case that we use of a general result \cite{BerVelMTP}[Theorem~3].
\begin{thm} \label{MTP}
Let $X\subset\R$ and let $\lambda,\mu>0$. Let $\left(B_i\right)_{i\in\N}$ be a sequence of balls in $X$ (considered as a metric space with the evident distance $d(x,y)=|x-y|$) such that radii of $B_i$ tend to 0 as $i\to\infty$. For each ball $B_i=B_i(x_i,r_i)$ define $B_i^{\lambda/\mu}:=B\left(x_i,r_i^{\lambda/\mu}\right)$.

Assume that, for every ball $B\subset X$,
\[
\dim_H\left(B\cap\limsup_{i\to\infty}B_i^{\lambda/\mu}\right)\geq\mu.
\]
Then, for every ball $B\subset X$,
\[
\dim_H\left(B\cap\limsup_{i\to\infty}B_i\right)\geq\min\left(\lambda,\dim_H X\right).
\]
\end{thm}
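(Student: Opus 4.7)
The plan is to derive Theorem~\ref{MTP} as a Hausdorff-dimension particular case of the general Mass Transference Principle \cite[Theorem~3]{BerVelMTP}, whose conclusion is formulated in terms of Hausdorff measures for general gauge functions. The reduction naturally proceeds in three steps.

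First, since $\limsup_{i\to\infty}B_i\subseteq X$, one has trivially $\dim_H(B\cap\limsup B_i)\leq\dim_H X$, so it suffices to prove the bound $\dim_H(B\cap\limsup B_i)\geq\lambda$ under the assumption $\lambda\leq\dim_H X$; the factor $\min(\lambda,\dim_H X)$ in the conclusion is simply a uniform way of encoding both this case and the trivial one.

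Second, I translate the dimension hypothesis into a Hausdorff-measure hypothesis at an arbitrary exponent $s\in(0,\mu)$. The assumption $\dim_H(B'\cap\limsup B_i^{\lambda/\mu})\geq\mu$ for every ball $B'\subseteq X$ immediately implies
\[
\mathcal{H}^{s}\bigl(B'\cap\limsup_{i\to\infty}B_i^{\lambda/\mu}\bigr)=+\infty,
\]
which is the local $s$-Hausdorff-measure input required to trigger the Mass Transference Principle at an appropriate gauge.

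Third, I invoke \cite[Theorem~3]{BerVelMTP} with the gauge function $f(r):=r^{s\lambda/\mu}$, for which the rescaled balls appearing in the hypothesis of that principle are $B(x_i,f(r_i)^{1/s})=B(x_i,r_i^{\lambda/\mu})=B_i^{\lambda/\mu}$, exactly matching the family entering our hypothesis. The conclusion of the Mass Transference Principle then yields
\[
\mathcal{H}^{s\lambda/\mu}\bigl(B\cap\limsup_{i\to\infty}B_i\bigr)>0
\]
for every ball $B\subseteq X$, which forces $\dim_H(B\cap\limsup B_i)\geq s\lambda/\mu$. Letting $s\nearrow\mu$ gives the desired inequality $\dim_H(B\cap\limsup B_i)\geq\lambda$.

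The most delicate point in executing this plan rigorously is the verification of the local Ahlfors-regularity condition required by the general metric-space version of the Mass Transference Principle: $X$ itself need not carry an Ahlfors-regular measure of the full dimension $\dim_H X$. The standard workaround is to restrict the argument to a Cantor-type subset $X_\eta\subseteq X$ supporting an Ahlfors-regular measure of Hausdorff dimension at least $\dim_H X-\eta$, apply \cite[Theorem~3]{BerVelMTP} inside $X_\eta$, and take $\eta\to0^+$; this accounts for the appearance of $\min(\lambda,\dim_H X)$ (rather than simply $\lambda$) in the final statement.
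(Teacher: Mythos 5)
You should first note that the paper does not actually prove this statement: Theorem~\ref{MTP} is presented verbatim as ``a particular case \dots of a general result \cite[Theorem~3]{BerVelMTP}'', so your attempt to carry out the reduction explicitly goes beyond what the authors do. Unfortunately, the reduction as you have set it up has a genuine gap at its central step. The hypothesis of the Mass Transference Principle (in either its Euclidean form or its Ahlfors-regular metric-space form) is a \emph{full-measure} statement: one needs $\mathcal{H}^{\delta}\bigl(B\cap\limsup_{i}B_i^{f,\delta}\bigr)=\mathcal{H}^{\delta}(B)$ for every ball $B$ of an ambient space carrying an Ahlfors $\delta$-regular measure. Your Step~2 yields only $\mathcal{H}^{s}\bigl(B\cap\limsup_{i}B_i^{\lambda/\mu}\bigr)=+\infty$ for $s<\mu$, which is neither a full-measure statement nor attached to any regular ambient measure, and your proposed repair --- restricting to an Ahlfors-regular subset $X_\eta\subset X$ of dimension close to $\dim_H X$ --- does not recover it: the dimension hypothesis gives no control whatsoever on $\limsup_{i}B_i^{\lambda/\mu}\cap X_\eta$, which may be $\mathcal{H}^{s}$-null in $X_\eta$ or even empty, so the hypothesis of \cite[Theorem~3]{BerVelMTP} cannot be verified there. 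The regular set must instead be extracted from the limsup set itself: for $s<\mu$ one wants a compact Ahlfors $s$-regular $K\subset B\cap\limsup_{i}B_i^{\lambda/\mu}$, for then every point of $K$ lies in infinitely many $B_i^{\lambda/\mu}$ and the full-measure hypothesis holds trivially with $K$ as the ambient space; the existence of such a $K$ inside a Borel set of Hausdorff dimension greater than $s$ is itself a nontrivial ingredient that you would need to cite or prove.

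A secondary issue is your Step~1: you dismiss the case $\lambda>\dim_H X$ as ``the trivial one'', but the conclusion claimed there, namely $\dim_H\bigl(B\cap\limsup_{i}B_i\bigr)\geq\dim_H X$, is a full-dimension statement and is not trivial. It does follow from the case $\lambda'<\dim_H X$, but only via the additional observation that $r_i^{\lambda'/\mu}\geq r_i^{\lambda/\mu}$ for $\lambda'\leq\lambda$ and $r_i\leq 1$, whence $B_i^{\lambda'/\mu}\supseteq B_i^{\lambda/\mu}$ and the hypothesis persists when $\lambda$ is decreased; this should be said explicitly rather than waved away.
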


Theorem~\ref{THM_four_dim_inters_MT} below could be considered as a partial extension of Theorem~\ref{THM_four_dim_inters}. It provides a non-trivial lower bound for Hausdorff dimension of intersection of the set of well-approximable numbers with other sets.
\begin{thm}\label{THM_four_dim_inters_MT}
Let $A \subset \R$ be a Borel set and let $\dim_F A = d \ne 0$. Let $\alpha \in (0,1)$ and let $\ca{A}= (q_n)_{n\in \N} $  be an increasing $\alpha$-separated sequence of natural numbers. Assume that, for some $\delta>0$ and $n_0 \in \N$, the sequence $\ca{A}$ has the growth rate~\eqref{q_n_gr_base}.
Let $\psi:\mathbb{N}\rightarrow \I$ be an approximation function.
Then, for any $\gamma \in \I$,
\[
\dim_H \left(W_{\ca{A}}(\gamma; \psi) \cap A \right)\geq \tau(\ca{A},\psi)\cdot d,
\]
where $\tau(\ca{A},\psi)$ is defined by~\eqref{const_in_upper_h_bound}.
\end{thm}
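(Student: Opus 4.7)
The plan is to deduce Theorem~\ref{THM_four_dim_inters_MT} from Theorem~\ref{THM_four_dim_inters} by invoking the Mass Transference Principle (Theorem~\ref{MTP}) applied to a suitably dilated approximation function. Write
\[
W_{\ca{A}}(\gamma;\psi)=\limsup_{i\to\infty} B_i,
\]
where $(B_i)_{i\in\N}$ enumerates the approximation balls $B\bigl(x_{n,k},\psi(q_n)/q_n\bigr)$ with centres $x_{n,k}=(\gamma+k)/q_n$, taken over all $n\in\N$ and $0\leq k\leq q_n-1$.

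For each $t\in(0,\tau(\ca{A},\psi))$ I would introduce the dilated approximation function
\[
\tilde\psi_t(q):=q\left(\frac{\psi(q)}{q}\right)^{t}.
\]
By the definition of $\tau(\ca{A},\psi)$ in~\eqref{const_in_upper_h_bound}, the hypothesis $t<\tau(\ca{A},\psi)$ forces
\[
\sum_{n=1}^{\infty}\tilde\psi_t(q_n)=\sum_{n=1}^{\infty}q_n\left(\frac{\psi(q_n)}{q_n}\right)^{t}=\infty,
\]
so $\tilde\psi_t$ meets the divergence condition~\eqref{diverg_cond}. The approximation balls associated with $\tilde\psi_t$ coincide exactly with the balls $B_i^{t}$ in the notation of Theorem~\ref{MTP}: same centres as $B_i$, but radii $r_i^{t}$ where $r_i=\psi(q_n)/q_n$. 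Applying Theorem~\ref{THM_four_dim_inters} with $\tilde\psi_t$ in place of $\psi$ therefore yields $\dim_F\bigl(\limsup_{i\to\infty} B_i^{t}\cap A\bigr)=d$. In particular, the non-atomic probability measure $\mu_A^{(\varepsilon)}$ on $A$ with $|\widehat{\mu_A^{(\varepsilon)}}(\xi)|=O(|\xi|^{-d/2+\varepsilon})$ furnished by the proof of Theorem~\ref{THM_four_dim_inters} assigns full mass to $\limsup_i B_i^{t}$. A standard energy-integral computation upgrades this Fourier decay into a Frostman bound $\mu_A^{(\varepsilon)}\bigl(B(x,r)\bigr)=O\bigl(r^{d-\eta}\bigr)$ for every $\eta>0$, and hence
\[
\dim_H\bigl(B\cap\limsup_{i\to\infty} B_i^{t}\bigr)\geq d
\]
for every ball $B$ meeting $\supp\mu_A^{(\varepsilon)}$ in a set of positive measure.

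I then apply Theorem~\ref{MTP} with $X=\supp\mu_A^{(\varepsilon)}\subset A$, $\mu=d$ and $\lambda=td$, so that $\lambda/\mu=t$. The hypothesis is supplied by the display above, and the conclusion reads
\[
\dim_H\bigl(B\cap\limsup_{i\to\infty} B_i\bigr)\geq\min\bigl(td,\dim_H X\bigr)=td,
\]
using $\dim_H X\geq d$ from~\eqref{hasfurineq} (we may assume $td\leq\dim_H A$; otherwise the claimed bound $\tau(\ca{A},\psi)\cdot d$ is simply read against the ambient $\dim_H A$). Exhausting $\supp\mu_A^{(\varepsilon)}\subset A$ by balls gives $\dim_H\bigl(W_{\ca{A}}(\gamma;\psi)\cap A\bigr)\geq td$, and letting $t\nearrow\tau(\ca{A},\psi)$ produces the desired bound.

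The main obstacle is the MTP set-up: the centres $x_{n,k}$ of the balls $B_i$ are rationals $(\gamma+k)/q_n$ and thus generally do not lie in $A$, so Theorem~\ref{MTP} as stated (which requires balls in the metric space $X$) does not apply verbatim with $X=A$ or $X=\supp\mu_A^{(\varepsilon)}$. One has to invoke the general measure-theoretic MTP of \cite{BerVelMTP}, which pairs an ambient sequence of balls with a Frostman measure on a fractal subset. A secondary technical step is to upgrade the $\mu_A^{(\varepsilon)}$-full-measure statement on $\limsup_i B_i^{t}$ to a Hausdorff-dimension lower bound that holds in \emph{every} ball charged by $\mu_A^{(\varepsilon)}$, which is precisely where the Frostman exponent $d-\eta$ extracted from the Fourier decay enters.
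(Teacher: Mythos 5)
Your proposal takes essentially the same route as the paper, whose entire proof is the single sentence that the result follows from Theorem~\ref{THM_four_dim_inters}, inequalities~\eqref{ineq_HFdimen_inters} and Theorem~\ref{MTP} with $\lambda=d\cdot\tau(\ca{A},\psi)$ and $\mu=d$ --- i.e.\ exactly the dilation-plus-mass-transference argument you spell out. The technical obstacles you flag (the ball centres $(\gamma+k)/q_n$ not lying in $A$, and localizing the full-measure statement to every ball of $X$) are genuine, but the paper glosses over them entirely, so your write-up is if anything more careful than the original.
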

\begin{proof}
The result follows from Theorem~\ref{THM_four_dim_inters}, inequalities~\eqref{ineq_HFdimen_inters} and Theorem~\ref{MTP} with $\lambda=d\cdot \tau(\ca{A},\psi)$ and $\mu=d$.
\end{proof}
\begin{example}
Let $\lambda\geq 0$, $\gamma_1\in\I$ and let $\psi_1$ and $\ca{A}_1$ be the same as in Remark~\ref{rem_salem} (for the sake of even further concretness, one could take $\psi_1: t\to t^{-\lambda}$ and $\ca{A}_1=\Z$). As it is discussed in Remark~\ref{rem_salem}, in this case $W_{\ca{A}_1}(\gamma_1,\psi_1)$ is a Salem set verifying
\[
\dim_H W_{\ca{A}_1}(\gamma_1,\psi_1)=\dim_F W_{\ca{A}_1}(\gamma_1,\psi_1)=\frac{2}{1+\lambda}.
\]
Further, let $a\in\N$, $a\geq 2$ and let $\ca{A}=\left(a^n\right)_{n\in\N}$. It is not hard to verify that the sequence $\ca{A}$ is $1/2$-separated. Then, it follows from Theorem~\ref{THM_four_dim_inters_MT} that, for any approximating function $\psi$,
\[
\dim_H\left(W_{\ca{A}_1}(\gamma_1,\psi_1)\cap W_{\ca{A}}(\gamma,\psi)\right)\geq \frac{2\cdot\tau(\ca{A},\psi)}{1+\lambda}.
\]
\end{example}

The following corollary extends Corollary~\ref{cor_int_not_salem}, providing thus a J\"arnik-type theorem for a wide range on sequences of restraint denominators.
\begin{cor}\label{THM_four_dim_inters_MT_2}
Let $\ca{A}= (q_n)_{n\in \N} $  be an increasing $\alpha$-separated sequence of natural numbers, for some $\alpha\in(0,1)$, and assume that, for some $\delta>0$ and $n_0 \in \N$, the sequence $\ca{A}$ has the growth rate~\eqref{q_n_gr_base}.
Let $\psi:\mathbb{N}\rightarrow \I$ be an approximation function.
Then, for any $\gamma \in \I$,
\[
\dim_H \left(W_{\ca{A}}(\gamma; \psi) \right)=\tau(\ca{A},\psi),
\]
where $\tau(\ca{A},\psi)$ is defined by~\eqref{const_in_upper_h_bound}.
\end{cor}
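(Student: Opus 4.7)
The plan is to derive the two inequalities in the claimed equality separately, reading the target as $\dim_H W_{\ca{A}}(\gamma;\psi)=\min\{\tau(\ca{A},\psi),1\}$; the minimum with $1$ is harmless since $W_{\ca{A}}(\gamma;\psi)\subset\I$ automatically forces $\dim_H\le 1$.

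The upper bound $\dim_H W_{\ca{A}}(\gamma;\psi)\le\min\{\tau(\ca{A},\psi),1\}$ is immediate from~\eqref{hausdupperbound} and requires neither the $\alpha$-separation nor the growth condition~\eqref{q_n_gr_base}.

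For the matching lower bound, the plan is to intersect $W_{\ca{A}}(\gamma;\psi)$ with Salem sets of Fourier dimension arbitrarily close to $1$ and then invoke Theorem~\ref{THM_four_dim_inters_MT}. Concretely, for any $\varepsilon\in(0,1)$ I would produce a Salem set $A_\varepsilon\subset\I$ with $\dim_F A_\varepsilon=d>1-\varepsilon$: following Remark~\ref{rem_salem}, taking $\ca{A}_1=\N$, the approximation function $\psi_1(q)=q^{-\lambda}$ with $\lambda=(1+\varepsilon)/(1-\varepsilon)$, and any $\gamma_1\in\I$, the set $W_{\N}(\gamma_1;\psi_1)$ is Salem with $\dim_F=\dim_H=1-\varepsilon$ by~\eqref{dimW_1}. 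Since the hypotheses of Theorem~\ref{THM_four_dim_inters_MT} ($\alpha$-separation and the growth~\eqref{q_n_gr_base}) are precisely what is assumed in the corollary, applying it with $A=A_\varepsilon$ yields
\[
\dim_H\bigl(W_{\ca{A}}(\gamma;\psi)\cap A_\varepsilon\bigr)\ge\tau(\ca{A},\psi)\cdot d>\tau(\ca{A},\psi)(1-\varepsilon).
\]
Monotonicity of Hausdorff dimension under the inclusion $W_{\ca{A}}(\gamma;\psi)\cap A_\varepsilon\subset W_{\ca{A}}(\gamma;\psi)$ then gives $\dim_H W_{\ca{A}}(\gamma;\psi)\ge\tau(\ca{A},\psi)(1-\varepsilon)$; letting $\varepsilon\to 0^+$ completes the lower bound.

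No major conceptual obstacle is expected, since Theorem~\ref{THM_four_dim_inters_MT} already encapsulates the hard analytic content, combining the Schmidt-type Theorem~\ref{mainTHM} with the Mass Transference Principle. The only fine points to check are the existence of Salem subsets of $\I$ with Fourier dimension arbitrarily close to $1$, supplied explicitly by~\eqref{dimW_1}, and a clean passage to the limit along these auxiliary sets.
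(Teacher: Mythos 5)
Your argument is correct and follows essentially the paper's own route: the upper bound is~\eqref{hausdupperbound}, and the lower bound is an application of Theorem~\ref{THM_four_dim_inters_MT}. The only difference is that the paper applies Theorem~\ref{THM_four_dim_inters_MT} once, directly with $A=[0,1]$ (which has Fourier dimension $1$), so your approximating family of Salem sets $A_\varepsilon$ and the passage to the limit $\varepsilon\to 0^+$ are an unnecessary detour.
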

\begin{proof}
Because of~\eqref{hausdupperbound}, we need to prove only the lower bound
\[
\dim_H W_{\ca{A}}(\gamma; \psi)\geq \tau(\ca{A},\psi).
\]
But this result follows from Theorem~\ref{THM_four_dim_inters_MT} with the remark that the interval $A=[0,1]$ has Fourier dimension 1.
\end{proof}

\section{Establishing the main result} \label{Section_proof}

The proof of Theorem \ref{mainTHM} is based on the following lemma ~\cite[Lemma~1.5]{harman}.

\begin{lem} \label{ebc}
Let $(X,\ca{B},\mu)$ be a probability space, let $(f_n(x))_{n \in \N}$ be a sequence of non-negative $\mu$-measurable functions defined on $X$, and $(f_n)_{n \in \N },\ (\phi_n)_{n  \in \N}$ be sequences of real numbers  such that
$$ 0\leq f_n \leq \phi_n \hspace{7mm} (n=1,2,\ldots).$$

\noindent 
Suppose that for arbitrary  $a,b \in \N$ with $a <  b$, we have
\begin{equation} \label{ebc_condition1}
\int_{X} \left(\sum_{n=a}^{b} \big( f_n(x) -  f_n \big) \right)^2\mathrm{d}\mu(x)\, \leq\,  C\!\sum_{n=a}^{b}\phi_n
\end{equation}

\noindent for an absolute constant $C>0$. Then, for any given $\varepsilon>0$,  we have
\begin{equation} \label{ebc_conclusion}
\sum_{n=1}^N f_n(x)\, =\, \sum_{n=1}^{N}f_n\, +\, O\left(\Phi(N)^{1/2}\log^{\frac{3}{2}+\varepsilon}\Phi(N)+\max_{1\leq k\leq N}f_k\right)
\end{equation}
\noindent for $\mu$-almost all $x\in X$, where $
\Phi(N):= \sum\limits_{n=1}^{N}\phi_n
$. 
\end{lem}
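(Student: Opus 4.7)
The strategy is the classical G\'al--Koksma-type two-scale Borel--Cantelli argument, combined with a Rademacher--Menshov maximal inequality. Setting $T_N(x) := \sum_{n=1}^{N}(f_n(x)-f_n)$, I need to show $T_N(x) = O\!\left(\Phi(N)^{1/2}\log^{3/2+\varepsilon}\Phi(N)+\max_{k\le N} f_k\right)$ for $\mu$-almost every $x$. If $\Phi(N)$ stays bounded the estimate is immediate from the triangle inequality and the $\max_{k\le N} f_k$ remainder, so I may assume $\Phi(N)\to\infty$.

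\textbf{Sparse subsequence.} Fix $\varepsilon>0$ and choose integers $N_1<N_2<\cdots$ with $\Phi(N_j)\in[2^{j},2^{j+1})$. Hypothesis~\eqref{ebc_condition1} with $a=1$, $b=N_j$ gives $\int_X T_{N_j}^{2}\,d\mu\le C\Phi(N_j)$. Chebyshev's inequality with threshold $\Phi(N_j)^{1/2}\log^{1/2+\varepsilon}(\Phi(N_j)+2)$ yields tail probability $O\!\left((\log \Phi(N_j))^{-(1+2\varepsilon)}\right)=O(j^{-(1+2\varepsilon)})$, which is summable in $j$. Borel--Cantelli then provides, for $\mu$-a.e.\ $x$, the bound
\[
|T_{N_j}(x)| \le \Phi(N_j)^{1/2}\log^{1/2+\varepsilon}(\Phi(N_j)+2) \quad \text{for all sufficiently large } j.
\]

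\textbf{Block maxima.} Define $M_j(x):=\max_{N_{j-1}\le N\le N_j}|T_N(x)-T_{N_{j-1}}(x)|$. The classical Rademacher--Menshov argument, writing each partial increment via at most $\lceil\log_2(N_j-N_{j-1})\rceil$ concatenated dyadic block sums of the form covered by~\eqref{ebc_condition1} and combining through Cauchy--Schwarz, gives
\[
\int_X M_j^2\,d\mu \;\le\; C\bigl(\log_2(N_j-N_{j-1})+1\bigr)^{2}\!\!\sum_{n=N_{j-1}+1}^{N_j}\phi_n \;\le\; C'(\log\Phi(N_j))^{2}\,\Phi(N_j),
\]
where the second inequality uses that indices $n$ with $\phi_n=0$ force $f_n=0$ and, via~\eqref{ebc_condition1} at $a=b=n$, also $f_n(x)=0$ for $\mu$-a.e.\ $x$, so such $n$ may be deleted without altering $T_N$ on a full-measure set; after this reduction $N_j\le\Phi(N_j)^{O(1)}$. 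A second Chebyshev and Borel--Cantelli application with threshold $\Phi(N_j)^{1/2}\log^{3/2+\varepsilon}(\Phi(N_j)+2)$ yields $M_j(x)\le\Phi(N_j)^{1/2}\log^{3/2+\varepsilon}(\Phi(N_j)+2)$ eventually in $j$. For any $N\in(N_{j-1},N_j]$, the triangle inequality $|T_N(x)|\le|T_{N_{j-1}}(x)|+M_j(x)$ and $\Phi(N_j)\le 2\Phi(N)$ deliver the stated bound.

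\textbf{Main obstacle.} The delicate step is producing the correct $\log^{3/2+\varepsilon}\Phi(N)$ exponent with $\log\Phi(N)$ inside rather than $\log N$. The Rademacher--Menshov dyadic decomposition naturally supplies $\log^{2}(N_j-N_{j-1})$, and reducing this to $\log^{2}\Phi(N_j)$ relies essentially on the assumption $0\le f_n\le\phi_n$, which forces vanishing of the $\phi_n=0$ summands and permits their removal. The $\max_{k\le N}f_k$ term in the conclusion absorbs the finitely many exceptional indices $j$ produced by the two Borel--Cantelli steps, as well as the bounded-$\Phi$ regime where the main error term is vacuous.
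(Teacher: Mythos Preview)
The paper does not prove this lemma; it is quoted as \cite[Lemma~1.5]{harman} and used as a black box. Your outline has the right overall architecture (sparse subsequence via Borel--Cantelli, then a maximal inequality to fill the gaps), but the block-maxima step contains a genuine gap.

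The assertion ``after this reduction $N_j\le\Phi(N_j)^{O(1)}$'' is false. Deleting indices with $\phi_n=0$ does not bound the surviving $\phi_n$ away from zero, and that is what you would need. For a concrete counterexample take $\phi_n=1/n$: no index is removed, $\Phi(N)\sim\log N$, so $N_j\sim\exp(2^{j})$ while $\Phi(N_j)\sim 2^{j}$, and $\log_2(N_j-N_{j-1})$ is exponentially larger than $\log\Phi(N_j)$. Hence the second inequality in your displayed Rademacher--Menshov estimate fails, and the argument as written cannot produce an error term with $\log\Phi(N)$ rather than $\log N$. (A minor separate point: \eqref{ebc_condition1} is only assumed for $a<b$, so invoking it at $a=b=n$ is not literally available.)

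The standard repair, which is exactly what Harman's proof does, is to carry out the dyadic decomposition with respect to $\Phi$ rather than with respect to the index. For integers $i\ge0$ and $r\ge1$ set $m_{i,r}$ to be the largest $N$ with $\Phi(N)<r\,2^{i}$; each block $(m_{i,r},m_{i,r+1}]$ then satisfies $\sum_n\phi_n\le 2^{i}+\max_k\phi_k$, and any initial segment $[1,N]$ decomposes into $O(\log\Phi(N))$ such blocks. Applying \eqref{ebc_condition1} block by block and running Chebyshev plus Borel--Cantelli over the countable family of blocks gives the exponent $\tfrac32+\varepsilon$ with $\log\Phi(N)$ appearing naturally, and the $\max_{k\le N}f_k$ term in the conclusion is there precisely to absorb the overshoot of $\Phi$ at the block endpoints.
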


A mechanism of applying Lemma \ref{ebc} for proving similar to our counting results can be found in ~\cite{povezazo}. We will
move in parallel to their proof taking into account that, unlike~\cite{povezazo}, we do not have a growth conditions on the
sequence $\ca{A}= (q_n)_{n\in \N}.$ and decay rate for $\hat{\mu}(t)$, which are replaced in our paper by balance condition~\eqref{growcond}.

\subsection{Choosing $f_n(x)$ and $f_n$ from Lemma \ref{ebc}}

Let us consider Lemma \ref{ebc}  with
\begin{equation} \label{Harman_choice_parameters}
X := \I \, , \qquad f_n(x):= \chi_{_{E(q_n, \gamma, \psi)}}(x)  \qquad {\rm and } \qquad
f_n=2\psi(q_n) \, ,
\end{equation}

It follows from~\eqref{countdef_chi} that, with this choice of $f_n(x)$,
$$%
\text{the l.h.s.  of  } \eqref{ebc_conclusion}  \ =   \ R(x, N).
$$

Note that $f_n(x)^2=f_n(x), \quad x \in \I,$, so, for any $a,b \in \N$ with $a <  b$,

\begin{equation*}
\begin{aligned}
\left(\sum_{n=a}^{b}(f_n(x)-f_n) \right)^2  &=  \ \left(\sum_{n=a}^{b}f_n(x)\right)^2 \, + \, \left(\sum_{n=a}^{b}f_n\right)^2 -  \ 2\sum_{n=a}^{b}f_n(x) \cdot \sum_{n=a}^{b}f_n \\[2ex]
   =  \  \sum_{n=a}^{b}f_n(x)  \  &+  \ 2\mathop{\sum\sum}_{a\leq m < n\leq b}f_m(x)f_n(x) \  +  \  \left(\sum_{n=a}^{b}f_n\right)^2  \ -  \ 2\sum_{n=a}^{b}f_n\cdot\sum_{n=a}^{b}f_n(x) \,  ,  \vspace{2mm}
\end{aligned}
\end{equation*}

\vspace{2mm}

\noindent and so it follows that
\vspace{2mm}
\begin{equation} \label{integral_square_estimate}
\begin{aligned}
  \text{the l.h.s.  of  } \eqref{ebc_condition1} 
  \ =  \   \sum_{n=a}^{b}\mu(E_n)  \, + \, 2 &\mathop{\sum\sum}_{a\leq m<n\leq b}  \mu(E_m\cap E_n)-  \\[2ex] \, &- \ 4 \sum_{n=a}^{b} \psi(q_n) \left( \sum_{n=a}^{b}\mu(E_n)  - \sum_{n=a}^{b} \psi(q_n) \right),
\end{aligned}
\end{equation}
here we used the following short notation $E_n:=E(q_n, \gamma, \psi), \quad n \in \N.$

\vspace{2mm}
Relation \eqref{integral_square_estimate} shows that we need to obtain a `good' estimates of the measure of sets $E_n=E(q_n, \gamma, \psi)$
and the measure of their intersections to satisfy the condition \eqref{ebc_condition1}.

\subsection{Estimating the measure of the sets $E_n$ and their intersections}
In this section we present different estimates that help us to prove the main result. We will use the Fourier
analysis estimating the measure of sets $E_n$
and their intersections.

Let $\varepsilon$ and $\delta$ be real numbers such that  $0<\varepsilon\leq 1$ and $0< \delta<1/4$.  Let  $ \chi_{\delta} : \I \to \R $  be the characteristic function defined by
$$ 
\chi_{\delta}(x):= \begin{cases}1 \ \text{ if }  \ \|x\|\leq \delta,  \\[1ex]
 0 \ \text{ if } \  \|x\|>\delta \, ,  \end{cases}    $$
and let  $\chi_{\delta, \varepsilon}^+  : \I \to  \R    $ and $  \chi_{\delta, \varepsilon}^-: \I \to \R$ be the continuous upper and lower approximations of  $ \chi_{\delta} $ given by  \vspace{2mm}
$$ 
\chi_{\delta, \varepsilon}^+(x):= \begin{cases} 1 & \text{\  if   }  \  \|x\|\leq \delta, \\[1ex]
1+ \dfrac{1}{\delta\varepsilon}(\delta-\|x\|) &\text{\   if }  \  \delta < \|x\| \leq (1+\varepsilon)\delta \\[1.3ex]
0 & \text{\ if } \  \|x\|>(1+\varepsilon)\delta \, , \end{cases}  $$
and

$$ 
 \chi_{\delta, \varepsilon}^-(x): = \begin{cases} 1 & \text{\  if } \  \|x\|\leq (1 -\varepsilon)\delta  \\[1ex]
\dfrac{1}{\delta\varepsilon}(\delta-\|x\|) &\text{\   if } \  (1-\varepsilon)\delta < \|x\| \leq \delta \\[1.3ex]
0  & \text{\ if } \ \|x\|>\delta  \, . \end{cases}
$$

\noindent Clearly, both  $\chi_{\delta, \varepsilon}^+  $ and $  \chi_{\delta, \varepsilon}^- $ are periodic functions with period 1. Next, given a real positive function $\psi:\mathbb{N}\rightarrow \I$ and any integer $q\geq 4$, consider the functions  $W_{q,\gamma,\varepsilon,\psi}^{+} $ and $W_{q,\gamma,\varepsilon,\psi}^{-}$ defined by \vspace{2mm}
\begin{equation} \label{Wdef}
W_{q,\gamma,\varepsilon}^{+}(x)=W_{q,\gamma,\varepsilon,\psi}^{+}(x)\ :=\ \Big( \sum_{p=0}^{q-1}\delta_{\frac{p+\gamma}{q}} (x)  \Big) *\chi_{\frac{\psi(q)}{q},\varepsilon}^{+}(x)
\end{equation}
and
\begin{equation*} \label{W-def}
W_{q,\gamma,\varepsilon}^{-}(x) =W_{q,\gamma,\varepsilon,\psi}^{-} (x) \  :=  \ \Big(\sum_{p=0}^{q-1}\delta_{\frac{p+\gamma}{q}} (x) \Big) *\chi_{\frac{\psi(q)}{q},\varepsilon}^{-} (x)  \, , \vspace{2mm}
\end{equation*}
where $    *  $   denotes convolution and $\delta_x$ denotes the Dirac delta-function at  the point $x\in\mathbb{R}$.

It occurs that
\begin{eqnarray*}
W_{q,\gamma,\varepsilon}^{+}(x) =  \sum_{p=0}^{q-1}\chi_{\frac{\psi(q)}{q}, \varepsilon}^{+}\left(\textstyle{x- \frac{p+\gamma}{q}}\right)
\end{eqnarray*}
and
\begin{eqnarray*}
W_{q,\gamma,\varepsilon}^{-}(x) = \sum_{p=0}^{q-1}\chi_{\frac{\psi(q)}{q}, \varepsilon}^{-}\left(\textstyle{x- \frac{p+\gamma}{q}}\right) \,  .
\end{eqnarray*}
It thus  follows that for any $0<\varepsilon \le 1$ and any integer $q\geq 4$, \vspace{2mm}
\begin{equation} \label{muineq}
\int_{0}^{1}W_{q,\gamma,\varepsilon}^-(x)\mathrm{d}\mu(x)
\; \leq  \;
\mu( E(q, \gamma, \psi)) \;  \leq \; \int_{0}^{1}W_{q,\gamma,\varepsilon}^+(x)\mathrm{d}\mu(x). \vspace{2mm}
\end{equation}
Now we need to consider the Fourier series expansions
$$ \sum_{k \in \Z }\widehat{W}_{q,\gamma,\ve}^{\pm}(k)  \exp(2\pi kix)
$$
 of $ W_{q,\gamma,\varepsilon}^{+}  $ and  $W_{q,\gamma,\varepsilon}^{-} $.

The values and basic estimates of the Fourier coefficients $\widehat{W}_{q,\gamma,\ve}^{\pm}(k)$ are presented in ~\cite{povezazo} (see Lemma 1 in ~\cite{povezazo}). For the convenience of the reader, we collect them in the following proposition.

\begin{prop}\label{prop2}
Let $0< \varepsilon,\tilde{\varepsilon} \le 1$ and $\widehat{W}_{q,\gamma,\ve}^{\pm}(k), \quad k \in \Z,$ are the Fourier coefficients of $ W_{q,\gamma,\varepsilon}^{+}  $ and  $W_{q,\gamma,\varepsilon}^{-} $. Then,  for any integers $q, r \geq 4:$

\textbf{\emph{(i)}} for $k\neq 0$,
\vspace*{2mm}
\begin{equation} \label{fcoef}
\widehat{W}_{q,\gamma,\varepsilon}^+(k)\, =\,\begin{cases}
\exp\left( -\dfrac{2\pi  i k \gamma}{ q}\right) \dfrac{q\left(\cos(2\pi k\psi(q)q^{-1})-\cos(2\pi k\psi(q)q^{-1}(1+\varepsilon))\right)}{2\pi^2 k^2\psi(q)q^{-1}\varepsilon}   &\text{\  if   }  \ q\mid k \\[3ex]
0 & \text{\  if   }  \  q\nmid k \,  ,
\end{cases}  \,
 \end{equation}
and for $k = 0$,  \vspace*{2mm}
\begin{equation} \label{fcoef_zero}
\widehat{W}_{q,\gamma,\varepsilon}^{+}(0)=(2+\varepsilon) \,  \psi(q)\, ;  \vspace{2mm}
\end{equation}

\textbf{\emph{(ii)}} for $k\neq 0$,
\vspace{2mm}
\begin{equation} \label{fcoef2}
\widehat{W}_{q,\gamma,\varepsilon}^-(k)\, =\,
\begin{cases}
\exp\left( -\dfrac{2\pi  i k \gamma}{ q}\right) \dfrac{q\left(\cos(2\pi k\psi(q)q^{-1}(1-\varepsilon))-\cos(2\pi k\psi(q)q^{-1}) \right)}{2\pi^2 k^2\psi(q)q^{-1}\varepsilon}  &\text{\  if   }  \ q\mid k \\[3ex]
0 & \text{\  if   }  \  q\nmid k  \, ,   \vspace{2mm}
\end{cases}
\end{equation}
and for $k=0$,
\vspace{2mm}
\begin{equation} \label{fcoef_zero2}
\widehat{W}_{q,\gamma,\varepsilon}^-(0)  = (2 - \ve)\, \psi(q)  \, ; \vspace{2mm}
\end{equation}

\textbf{\emph{(iii)}} for any $ s\in\mathbb{Z}   \setminus\{0\} $
\begin{eqnarray}
\left|\widehat{W}_{q,\gamma,\varepsilon}^{\pm}(s q)\right| &\leq& (2+\varepsilon) \, \psi(q),\label{W_ub_psi}\\[2ex]
\left|\widehat{W}_{q,\gamma,\varepsilon}^{\pm}(s q)\right| &\leq& \frac{1}{\pi^2s^2\psi(q)\varepsilon}.\label{W_ub_1s2}
\end{eqnarray}

\textbf{\emph{(iv)}}
\begin{eqnarray} \label{ub_single_sum}
\sum_{s \in \Z} \big|\what{W}_{q,\gamma,\varepsilon}^{\pm}(sq)\big| \  <  \ \frac{3}{\varepsilon^{1/2}}
\end{eqnarray}
and
\begin{equation} \label{ub_double_sum}
\sum_{s\in \Z} \,  \sum_{t\in \Z}  \, \big|\what{W}_{q,\gamma,\varepsilon}^{\pm}(sq)\big| \ \big|\what{W}_{r,\gamma,\tilde{\varepsilon}}^{\pm}(tr)\big|  \ \leq \  \frac{9}{\varepsilon^{1/2}\cdot\tilde{\varepsilon}^{1/2}}  \, .
\end{equation}
\end{prop}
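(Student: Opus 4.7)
The plan is to exploit the convolution structure $W_{q,\gamma,\varepsilon}^{\pm}=D_{q,\gamma}*\chi_{\psi(q)/q,\varepsilon}^{\pm}$, where $D_{q,\gamma}:=\sum_{p=0}^{q-1}\delta_{(p+\gamma)/q}$ is the Dirac comb of step $1/q$ shifted by $\gamma/q$. Viewed as $1$-periodic distributions, Fourier coefficients multiply, $\widehat{W}^{\pm}(k)=\widehat{D}_{q,\gamma}(k)\cdot\widehat{\chi}_{\psi(q)/q,\varepsilon}^{\pm}(k)$. A direct geometric-sum calculation gives
\[
\widehat{D}_{q,\gamma}(k)=\sum_{p=0}^{q-1}e^{-2\pi ik(p+\gamma)/q}=\begin{cases} q\,e^{-2\pi ik\gamma/q}, & q\mid k,\\ 0, & q\nmid k,\end{cases}
\]
which immediately yields the vanishing statement $\widehat{W}^{\pm}(k)=0$ when $q\nmid k$, and reduces the entire problem to computing $\widehat{\chi}_{\delta,\varepsilon}^{\pm}$ with $\delta=\psi(q)/q<1/(4q)$ (small enough so the tent does not wrap around $\mathbb{T}$).

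For the upper trapezoid $\chi_{\delta,\varepsilon}^+$, I would recognise it as $(\varepsilon\delta)^{-1}$ times the convolution of two symmetric rectangular pulses $R_{w_1},R_{w_2}$ of widths $w_1=(2+\varepsilon)\delta$ and $w_2=\varepsilon\delta$ (chosen so that $|w_1-w_2|$ equals the plateau width $2\delta$ and $w_1+w_2$ equals the base width $2(1+\varepsilon)\delta$). Since $\widehat{R}_w(k)=\sin(\pi kw)/(\pi k)$ for $k\neq 0$, this gives
\[
\widehat{\chi}_{\delta,\varepsilon}^+(k)=\frac{\sin(\pi k(2+\varepsilon)\delta)\sin(\pi k\varepsilon\delta)}{\pi^{2}k^{2}\varepsilon\delta},
\]
and the product-to-sum identity $2\sin A\sin B=\cos(A-B)-\cos(A+B)$ collapses this to the claimed cosine difference. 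The $k=0$ values are obtained by integrating the trapezoid areas directly, giving $\widehat{\chi}_{\delta,\varepsilon}^{\pm}(0)=(2\pm\varepsilon)\delta$; multiplication by $\widehat{D}_{q,\gamma}(0)=q$ then delivers \eqref{fcoef_zero} and \eqref{fcoef_zero2}. The lower-trapezoid computation for $\chi^-$ is entirely parallel, with widths $(2-\varepsilon)\delta$ and $\varepsilon\delta$. This establishes (i) and (ii).

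For part (iii), inequality \eqref{W_ub_psi} is the trivial non-negativity bound: since $\chi^{\pm}\geq 0$, its Fourier coefficients are maximised in modulus at $k=0$, so
\[
|\widehat{W}^{\pm}(sq)|=q\,|\widehat{\chi}^{\pm}(sq)|\leq q\,\widehat{\chi}^{\pm}(0)=(2\pm\varepsilon)\psi(q).
\]
Inequality \eqref{W_ub_1s2} follows from the closed form of (i)--(ii) by replacing the numerator $|\cos(\cdot)-\cos(\cdot)|$ by its crude bound $2$ and substituting $k=sq$, so the $q^{2}$ from the numerator cancels against $q^{2}\cdot q^{-1}$ from $k^{2}\psi(q)q^{-1}$ to leave $\pi^{-2}s^{-2}\psi(q)^{-1}\varepsilon^{-1}$.

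For part (iv), I would apply the standard splitting trick: separate $\sum_{s\in\Z}|\widehat{W}^{\pm}(sq)|$ into $|s|\leq S$ (bound each term by \eqref{W_ub_psi}) and $|s|>S$ (bound each term by \eqref{W_ub_1s2}), obtaining
\[
\sum_{s\in\Z}|\widehat{W}^{\pm}(sq)|\;\lesssim\;S\,\psi(q)+\frac{1}{S\,\varepsilon\,\psi(q)},
\]
then optimise by choosing $S\asymp 1/\bigl(\sqrt{\varepsilon}\,\psi(q)\bigr)$; careful tracking of constants yields the explicit $3/\varepsilon^{1/2}$. The double-sum estimate \eqref{ub_double_sum} then factorises by Tonelli as the product of two single sums and follows immediately. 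The only genuinely computational step is the closed-form Fourier transform of the trapezoid in paragraph two; all other parts are bookkeeping deductions from it.
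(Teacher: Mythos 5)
The paper does not actually prove Proposition~\ref{prop2}: it states explicitly that these formulas and estimates are collected from Lemma~1 of~\cite{povezazo}, so there is no internal argument to compare against. Your derivation is the standard one and, for parts (i)--(iii), it is correct and complete. The Dirac--comb factor $\widehat{D}_{q,\gamma}(k)=q\,e^{-2\pi ik\gamma/q}$ for $q\mid k$ and $0$ otherwise is right and gives the vanishing for $q\nmid k$; the decomposition of the upper trapezoid as $(\varepsilon\delta)^{-1}R_{(2+\varepsilon)\delta}*R_{\varepsilon\delta}$ (and of the lower one with widths $(2-\varepsilon)\delta$ and $\varepsilon\delta$) reproduces the correct plateau, base and height, and the product-to-sum identity turns $\sin(\pi k(2+\varepsilon)\delta)\sin(\pi k\varepsilon\delta)$ into exactly the cosine difference of \eqref{fcoef} (similarly for \eqref{fcoef2}); the $k=0$ values are the trapezoid areas; and the two bounds of (iii) follow as you say, from positivity for \eqref{W_ub_psi} and from bounding the numerator by $2$ for \eqref{W_ub_1s2}.

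The one soft spot is the explicit constant in \eqref{ub_single_sum}. The split-and-optimise you sketch gives, with the optimal cut-off $S\asymp 1/(\psi(q)\sqrt{\varepsilon})$, a bound of the shape $(2+\varepsilon)\psi(q)+\tfrac{4\sqrt{2+\varepsilon}}{\pi}\,\varepsilon^{-1/2}\approx 3\psi(q)+2.21\,\varepsilon^{-1/2}$. This is $O(\varepsilon^{-1/2})$ with an explicit constant, but it is \emph{not} below $3\varepsilon^{-1/2}$ when $\psi(q)$ and $\varepsilon$ are both close to $1$; indeed, for $\psi(q)=\varepsilon=1$ the single term $s=0$ already equals $(2+\varepsilon)\psi(q)=3$, so no amount of care in the tail will recover the strict inequality as stated (the remaining terms then vanish and the sum equals $3$). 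So the phrase ``careful tracking of constants yields the explicit $3/\varepsilon^{1/2}$'' papers over the only genuinely delicate point; to salvage it one must either exploit the sine factors more finely (e.g.\ the intermediate bound $|\widehat{W}^{\pm}_{q,\gamma,\varepsilon}(sq)|\le 1/(\pi|s|)$, obtained by bounding only one sine linearly) under an additional smallness hypothesis on $\psi$, or accept a slightly larger absolute constant. Since every use of \eqref{ub_single_sum} and \eqref{ub_double_sum} in the paper is up to absolute constants, this does not affect anything downstream, and the factorisation of the double sum \eqref{ub_double_sum} into a product of two single sums is indeed immediate.
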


By~\eqref{ub_single_sum}, $\sum\limits_{k \in \Z} \big| \widehat{W}_{q,\gamma,\varepsilon}^{\pm}(k) \big|  < \infty $, so the Fourier series
$$ \sum_{k \in \Z }\widehat{W}_{q,\gamma,\ve}^{\pm}(k)  \exp(2\pi kix)
$$
converges uniformly to $W_{q,\gamma,\varepsilon}^{\pm}(x)$ for all $ x \in \I$.  Hence, it  follows that
\begin{eqnarray*}
\int_0^1 W_{q,\gamma,\varepsilon}^{\pm}(x)  \; \mathrm{d}\mu(x) \; =  \;  \sum_{k\in \Z} \, \widehat{W}_{q,\gamma,\varepsilon}^{\pm}(k)  \;  \widehat{\mu}(-k)   \, .
\end{eqnarray*}
This together with \eqref{muineq}, \eqref{fcoef_zero},  \eqref{fcoef_zero2} and the fact that $\widehat{\mu}(0)=1$, implies that
\vspace*{3ex}
\begin{equation} \label{mu_ie}
\begin{array}{ll}
   \mu( E(q, \gamma, \psi))  \  \leq \ (2+\varepsilon) \, \psi(q)  \  + \displaystyle{\sum_{k \in \Z \setminus \{0 \} }}\widehat{W}_{q,\gamma,\varepsilon}^{+}(k)  \; \widehat{\mu}(-k)\;
   \\[6ex]
   \mu( E(q, \gamma, \psi))  \ \geq  \  (2-\varepsilon) \,  \psi(q)   \ + \displaystyle{\sum_{k \in \Z \setminus \{0 \} }}  \widehat{W}_{q,\gamma,\varepsilon}^{-}(k) \; \widehat{\mu}(-k)  \, .
    \end{array} \,
\end{equation}

\subsubsection{Estimating the sums from measures of $E_n$}
Now we are ready to prove the following estimates for the sums of $\mu(E_n).$

\begin{lem}\label{lemsizeEn}
Under the conditions of Theorem \ref{mainTHM}, 
we have, for arbitrary  $a,b \in \N$ with $a <  b$,
\begin{equation} \label{lem_sum_mu_good_result}
\sum_{n=a}^b\mu(E(q_n, \gamma, \psi)) = 2\sum_{n=a}^b\psi(q_n) \, + \, O\left( \min\Big(1,\sum_{n=a}^b\psi(q_n)\Big) \right).
\end{equation}
\end{lem}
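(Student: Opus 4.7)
The plan is to apply the Fourier-analytic two-sided inequality~\eqref{mu_ie} termwise with a common parameter $\varepsilon\in(0,1]$, and then sum over $n=a,\ldots,b$. From~\eqref{mu_ie} we have, for each $n$,
\[
\mu\bigl(E(q_n,\gamma,\psi)\bigr)\;=\;2\psi(q_n)\;+\;O(\varepsilon\,\psi(q_n))\;+\;\sum_{k\neq 0}\widehat{W}^{\pm}_{q_n,\gamma,\varepsilon}(k)\,\widehat{\mu}(-k),
\]
so the question reduces to controlling the $n$-sum of the Fourier tails.

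For fixed $n$ the Fourier tail is supported on the multiples $k=sq_n$ with $s\in\Z\setminus\{0\}$ by Proposition~\ref{prop2}(i)--(ii). I will combine two ingredients. First, the $\ell^1$-bound $\sum_{s\neq 0}\bigl|\widehat{W}^{\pm}_{q_n,\gamma,\varepsilon}(sq_n)\bigr|<3\varepsilon^{-1/2}$ from Proposition~\ref{prop2}(iv). Second, the uniform bound $|\widehat{\mu}(-sq_n)|\leq C\,h(|sq_n|)\leq C\,h(q_n)$ valid for all $|s|\geq 1$, which follows from~\eqref{growcond} together with the monotonicity of $h$. These yield a per-$n$ Fourier-tail bound $O\bigl(h(q_n)/\sqrt{\varepsilon}\bigr)$. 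Summing from $a$ to $b$ and invoking the growth hypothesis~\eqref{growcond_f} with $\rho>2$ to conclude $\sum_{n\geq 1}h(q_n)\leq C_1\sum_{n\geq 1}n^{-\rho}<\infty$, I obtain
\[
\Bigl|\sum_{n=a}^{b}\mu\bigl(E(q_n,\gamma,\psi)\bigr)-2\sum_{n=a}^{b}\psi(q_n)\Bigr|\;\ll\;\varepsilon\,S+\frac{1}{\sqrt{\varepsilon}},\qquad S:=\sum_{n=a}^{b}\psi(q_n).
\]

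With this inequality in hand I will recover the claimed $O(\min(1,S))$ bound by a case split on whether $S\geq 1$ or not. In the former case the choice $\varepsilon=1$ gives a total error of size $O(1)$, which is $O(\min(1,S))$. In the latter case the main term $2S$ is itself $O(\min(1,S))$, and combining the per-term bound with the positivity $\mu(E_n)\geq 0$ yields the same order for the discrepancy as well. The principal obstacle, visible in the displayed inequality, is the balance of $\varepsilon\,S$ against $\varepsilon^{-1/2}$: the hypothesis $\rho>2$ is used precisely to ensure the absolute convergence of $\sum h(q_n)$ so that the Fourier tail remains $O(1)$ (rather than growing with $b-a$). Once this is done, the bookkeeping of constants and the Fourier-coefficient estimates from Proposition~\ref{prop2} are routine.
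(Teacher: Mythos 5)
There is a genuine gap in the case $S:=\sum_{n=a}^{b}\psi(q_n)\geq 1$, which is exactly the half of \eqref{lem_sum_mu_good_result} that carries the content of the lemma: the error must be $O(1)$ uniformly in $a,b$, even when $S$ is enormous. Your displayed inequality $\bigl|\sum_{n}\mu(E_n)-2S\bigr|\ll \varepsilon S+\varepsilon^{-1/2}$ cannot deliver this. With $\varepsilon=1$ the right-hand side is $S+1$, which is $O(S)$ but not $O(1)$, so your claim that ``the choice $\varepsilon=1$ gives a total error of size $O(1)$'' is false; and optimizing over $\varepsilon\in(0,1]$ only brings the bound down to about $S^{1/3}$ (at $\varepsilon\asymp S^{-2/3}$), which still diverges with $S$. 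The obstruction is structural: a single smoothing parameter forces you to pay $\varepsilon\,\psi(q_n)$ on every term, and these penalties add up to $\varepsilon S$, while the Fourier tails cost $\varepsilon^{-1/2}$; no common choice of $\varepsilon$ reconciles the two. The paper's proof resolves this by letting the parameter depend on $n$: it takes $\varepsilon_n=\min\bigl(1,\Psi(n)^{-1-\xi}\bigr)$ with $\Psi(n)=\sum_{k\le n}\psi(q_k)$ and a small $\xi\in(0,2\rho-3)$. Then $\sum_n\psi(q_n)\varepsilon_n\le\sum_n\psi(q_n)/\max\bigl(1,\Psi(n)^{1+\xi}\bigr)$ converges by the elementary fact that $\sum_n a_n/S_n^{1+\xi}<\infty$ for partial sums $S_n$ (Lemma~\ref{lemconvserie}), while $\psi\le1$ gives $\varepsilon_n^{-1}\le n^{1+\xi}$, so the tail sum $\sum_n n^{-\rho}\varepsilon_n^{-1/2}\le\sum_n n^{-(\rho-\xi/2-1/2)}$ still converges. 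This $n$-dependent choice, together with Lemma~\ref{lemconvserie}, is the missing idea; it is also where the numerical hypothesis on $\rho$ actually enters (the paper needs only $\rho>3/2$ here, not convergence of $\sum h(q_n)$ per se).

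Your treatment of the complementary case $S<1$ is also not a proof as written. Positivity of $\mu(E_n)$ only bounds the discrepancy from below by $-2S$; for the required upper bound you need $\sum_n\mu(E_n)\ll S$, and your Fourier-tail estimate contributes $\varepsilon^{-1/2}\sum_n h(q_n)$, which is $O(1)$ but not $O(S)$ when $S$ is small. Making this half work requires comparing $\sum_{n=a}^{b}n^{-\rho}$ with $\sum_{n=a}^{b}\psi(q_n)$, which in the paper is legitimate only after the reduction (carried out in the proof of Theorem~\ref{mainTHM}) to auxiliary functions satisfying $\psi(q_n)\ge n^{-\tau}$ for some $\tau<\rho$, as in \eqref{estimpsiqn}. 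So both halves of the minimum in \eqref{lem_sum_mu_good_result} need input beyond what your argument supplies.
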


We need the following well known statement in order to prove Lemma \ref{lemsizeEn}. We'll leave
this statement without citation because, in our opinion, it belongs to the mathematical folklore.

\begin{lem}\label{lemconvserie}
Let $(a_n, n \in \N)$ be a sequence of nonnegative real numbers verifying $a_1>0$ and $$S_n=a_1+a_2+ \ldots + a_n, \quad n \in \N.$$
Then for any real $\xi >0$ the series
$$\sum_{n=1}^{\infty}\frac{a_n}{S_n^{1+\xi}}$$
converges.
\end{lem}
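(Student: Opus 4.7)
The plan is to use the standard integral comparison trick, which handles both the bounded and unbounded cases of $(S_n)$ uniformly. The key observation is that the sequence of partial sums $(S_n)$ is positive and non-decreasing, with $S_n \ge S_1 = a_1 > 0$ for all $n$, so the function $t \mapsto t^{-(1+\xi)}$ is well-defined and non-increasing on $[a_1, \infty)$.

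First I would fix $n \ge 2$. Since $t^{-(1+\xi)} \ge S_n^{-(1+\xi)}$ for every $t \in [S_{n-1}, S_n]$, I get the elementary pointwise inequality
\[
\frac{a_n}{S_n^{1+\xi}} \;=\; \frac{S_n - S_{n-1}}{S_n^{1+\xi}} \;\le\; \int_{S_{n-1}}^{S_n} \frac{dt}{t^{1+\xi}}.
\]
Summing this telescopically over $n = 2, \dots, N$ gives
\[
\sum_{n=2}^{N} \frac{a_n}{S_n^{1+\xi}} \;\le\; \int_{S_1}^{S_N} \frac{dt}{t^{1+\xi}} \;\le\; \int_{a_1}^{\infty} \frac{dt}{t^{1+\xi}} \;=\; \frac{a_1^{-\xi}}{\xi},
\]
which is finite (here I use crucially that $\xi > 0$). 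Adding the first term $a_1/S_1^{1+\xi} = a_1^{-\xi}$, the partial sums are uniformly bounded by $a_1^{-\xi}(1+1/\xi)$, so the series of non-negative terms converges.

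There is really no obstacle here; the only minor subtlety is making sure that possible zero terms $a_n = 0$ do not cause an issue, but this is immediate since then the summand vanishes and the denominator $S_n \ge a_1 > 0$ remains positive. The hypothesis $a_1 > 0$ is used precisely to keep the integral $\int_{a_1}^{\infty} t^{-(1+\xi)}\, dt$ finite.
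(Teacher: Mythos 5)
Your proof is correct and is essentially the same as the paper's: both compare $a_n/S_n^{1+\xi}$ with $\int_{S_{n-1}}^{S_n} t^{-(1+\xi)}\,dt$, telescope, and arrive at the same uniform bound $a_1^{-\xi}(1+1/\xi)$ on the partial sums. Nothing is missing.
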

\begin{proof}[Proof of Lemma \ref{lemconvserie}]
Firstly, note that, for any $n \in \N$,
\begin{equation}\label{intforlem}
  \int_{S_{n-1}}^{S_n} \frac{dx}{x^{1+\xi}} \ge \int_{S_{n-1}}^{S_n} \frac{dx}{S_n^{1+\xi}}=\frac{a_n}{S_n^{1+\xi}}.
\end{equation}
To prove the convergency of the series
$$\sum_{n=1}^{\infty}\frac{a_n}{S_n^{1+\xi}}$$
consider it partial sum
$$B_N=\sum_{n=1}^{N}\frac{a_n}{S_n^{1+\xi}}.$$
In view of \eqref{intforlem}, for any $N \in \N,$
$$B_N=\frac{a_1}{S_1^{1+\xi}}+ \frac{a_2}{S_2^{1+\xi}} +\frac{a_3}{S_3^{1+\xi}} + \ldots +\frac{a_N}{S_N^{1+\xi}} \le$$
$$\le \frac{a_1}{S_1^{1+\xi}} + \int_{S_1}^{S_2} \frac{dx}{x^{1+\xi}} +\int_{S_2}^{S_3} \frac{dx}{x^{1+\xi}} +\ldots +\int_{S_{N-1}}^{S_N} \frac{dx}{x^{1+\xi}}= \frac{1}{a_1^{\xi}} + \int_{S_1}^{S_N} \frac{dx}{x^{1+\xi}}=$$
$$=\frac{1}{a_1^{\xi}} + \frac{1}{\xi} \left(\frac{1}{S_1^{\xi}}-\frac{1}{S_N^{\xi}}\right) \le \frac{1}{a_1^{\xi}}\left(1+\frac{1}{\xi}\right).$$
So, the sequence of partial sums $(B_N, N\in \N)$ for the series with nonnegative elements is bounded, which implies the convergency of this series.
\end{proof}

Now we can prove Lemma \ref{lemsizeEn}.

\begin{proof}[Proof of Lemma \ref{lemsizeEn}] We prove Lemma \ref{lemsizeEn} under a weaker condition on number $\rho$ from \eqref{growcond_f}, compared to Theorem \ref{mainTHM}, namely $\rho > \frac{3}{2}$.
For any given sequence of real numbers $(\varepsilon_n)_{n=a}^b$ in $(0,1]$, it follows from ~\eqref{mu_ie} that
\begin{equation} \label{eq2}
\left|\mu(E_n)-2\psi(q_n)\right| \leq \psi(q_n)\varepsilon_n+
\max_{\circ\in\{+,-\}}\left|\sum_{k\in \Z \setminus \{0\}}\widehat{W}_{q_n,\gamma,\varepsilon_n}^{\circ}(k)\widehat{\mu}(-k)\right|, \quad a \le n \le b.
\end{equation}

It follows from~\eqref{fcoef} and  \eqref{fcoef2} that $\widehat{W}_{q_n,\gamma,\varepsilon_n}^{\pm}(k)=0$
unless $k = s q_n$ for some integer $s.$ Also from \eqref{growcond} we have, that $ |\what{\mu}(sq_n)| \ll n^{-\rho}, $ where $\rho>\frac{3}{2}.$ So, using \eqref{ub_single_sum},

\begin{equation}\label{eq3}
 \left|\sum_{k\in \Z \setminus \{0\}}\widehat{W}_{q_n,\gamma,\varepsilon_n}^{\pm}(k)\widehat{\mu}(-k)\right| \le \sum_{s \in \Z\setminus \{0\}} \big|\what{W}_{q,\gamma,\varepsilon}^{\pm}(sq)\big| \big|\what{\mu}(sq_n)\big|\ll \frac{3}{n^{\rho}\varepsilon_n^{\frac{1}{2}}}.
\end{equation}

Now, in view of \eqref{eq2} and \eqref{eq3},
\begin{equation}\label{lem_sum_mu_good_ie2_2}
\left|\sum_{n=a}^b\mu(E_n)-2\sum_{n=a}^b\psi(q_n)\right|\ll \sum_{n=a}^b\psi(q_n)\varepsilon_n+\sum_{n=a}^b\frac{3}{n^{\rho}\varepsilon_n^{\frac{1}{2}}},
\end{equation}
therefore, for $\varepsilon_n=1,$ $a \leq n \leq b$,
\begin{equation} \label{lem_sum_mu_good_result_half1}
\left|\sum_{n=a}^b\mu(E_n)-2\sum_{n=a}^b\psi(q_n)\right|\ll\sum_{n=a}^b\psi(q_n) \, .
\end{equation}

To complete the proof we need to get one more upper bound, namely
\begin{equation} \label{lem_sum_mu_good_result_half2}
\left|\sum_{n=a}^b\mu(E_n)-2\sum_{n=a}^b\psi(q_n)\right|\ll 1.
\end{equation}
We can deduce the upper bound \eqref{lem_sum_mu_good_result_half2} again from \eqref{lem_sum_mu_good_ie2_2} choosing suitable values of $(\varepsilon_n)_{n=a}^b.$ Since $\rho>\frac{3}{2},$ let's choose a real number $\xi \in (0, \, 2\rho -3).$ For any $n\in\N$ let
$$
\Psi(n):=\sum_{k=1}^n\psi(q_k)  \quad {\rm and } \quad \varepsilon_n:=\min\left(1,\Psi(n)^{-1-\xi}\right).
$$
By definition,  $|\psi(q_n)|\leq 1$  and so
$
\varepsilon_n^{-1}\leq n^{1+\xi} \, .
$
So, for the second term on the r.h.s. of \eqref{lem_sum_mu_good_ie2_2} we have
\begin{equation} \label{lem_sum_mu_good_second_sum_bounded}
\sum_{n=a}^b\frac{3}{n^{\rho}\varepsilon_n^{1/2}} \, \leq  \, \sum_{n=1}^\infty\frac{3}{n^{\rho - \frac{\xi}{2}-\frac{1}{2}}} \, <  \, \infty  \,,
\end{equation}
since $\rho - \frac{\xi}{2}-\frac{1}{2}>1$ because of the choice of $\xi$.


It follows from Lemma~\ref{lemconvserie} that, for $\xi>0$, the series
$$\sum_{n=1}^{\infty}\frac{\psi(q_n)}{\Psi(n)^{1+\xi}}$$
converges, so
$$\sum_{n=1}^{\infty}\frac{\psi(q_n)}{\max\left(1,\Psi(n)^{1+\xi}\right)}$$
converges also. Therefore,
\begin{equation} \label{lem_sum_mu_good_ie3}
\sum_{n=a}^b\psi(q_n)\varepsilon_n \, < \, \sum_{n=1}^{\infty}\frac{\psi(q_n)}{\max\left(1,\Psi(n)^{1+\xi}\right)} \, < \, \infty \, .
\end{equation}
The upper bound \eqref{lem_sum_mu_good_result_half2} now follows from the inequalities ~\eqref{lem_sum_mu_good_ie2_2}, \eqref{lem_sum_mu_good_second_sum_bounded} and~\eqref{lem_sum_mu_good_ie3}.
\end{proof}

\subsubsection{Estimating the sums from measures of intersections $E_n \cap E_m$}

Estimating the sums from measures of intersections of the sets $E_n$ is technically a more complicated work. Here we use
the condition on $\alpha$-separability (see Definition \ref{def2}), which we don't use proving Lemma \ref{lemsizeEn}.

Before formulating the statement on the upper bound for the sums from measures of intersections we prove the following
result, related to the case $\xi =0$ of Lemma~\ref{lemconvserie}.
\begin{lem}\label{lemdivserie}
Let $(a_n, n \in \N)$ be a sequence of nonnegative real numbers, $a_1 \ne 0$ and $$S_n=a_1+a_2+ \ldots + a_n, \quad n \in \N.$$
Then for any $N \in \N$ for the partial sums $B_N$ of the series
$$\sum_{n=1}^{\infty}\frac{a_n}{S_n}$$
we have
$$B_N=\sum_{n=1}^{N}\frac{a_n}{S_n} \le 1+ \log(S_N) - \log(a_1).$$
\end{lem}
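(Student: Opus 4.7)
The plan is to mimic the integral comparison argument already used in the proof of Lemma~\ref{lemconvserie}, but now applied to the divergent integral $\int dx/x$ rather than $\int dx/x^{1+\xi}$.

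First I would isolate the $n=1$ term: since $S_1 = a_1$, we have $a_1/S_1 = 1$. This term is handled trivially and supplies the ``$1$'' in the claimed bound.

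For $n \ge 2$, the key observation is that $x \mapsto 1/x$ is decreasing, so for each $n\ge 2$,
\begin{equation*}
\frac{a_n}{S_n} \;=\; \int_{S_{n-1}}^{S_n}\frac{dx}{S_n} \;\leq\; \int_{S_{n-1}}^{S_n}\frac{dx}{x},
\end{equation*}
exactly parallel to inequality~\eqref{intforlem} in the proof of Lemma~\ref{lemconvserie}. (Here we use that $a_n\ge 0$, so $S_{n-1}\le S_n$, and we interpret the interval as empty if $a_n=0$, in which case the inequality is trivial.) Telescoping these integrals over $n=2,\ldots,N$ gives
\begin{equation*}
\sum_{n=2}^{N}\frac{a_n}{S_n} \;\leq\; \int_{S_1}^{S_N}\frac{dx}{x} \;=\; \log(S_N) - \log(S_1) \;=\; \log(S_N) - \log(a_1).
\end{equation*}

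Adding back the $n=1$ contribution yields
\begin{equation*}
B_N \;=\; \frac{a_1}{S_1} + \sum_{n=2}^{N}\frac{a_n}{S_n} \;\leq\; 1 + \log(S_N) - \log(a_1),
\end{equation*}
which is the desired estimate. There is no real obstacle here; the only minor point worth noting in writing it up is the harmless case $a_n = 0$, for which the corresponding summand vanishes and the integral interval degenerates, so the argument goes through unchanged.
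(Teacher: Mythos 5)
Your proof is correct and follows essentially the same route as the paper's: the comparison $a_n/S_n = \int_{S_{n-1}}^{S_n} dx/S_n \le \int_{S_{n-1}}^{S_n} dx/x$, telescoping to $\log(S_N)-\log(S_1)$, with the $n=1$ term contributing the constant $1$. The remark about the degenerate case $a_n=0$ is a harmless refinement the paper leaves implicit.
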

\begin{proof}[Proof of Lemma \ref{lemdivserie}]
Firstly note that for any $n \in \N$
\begin{equation}\label{intforlem1}
  \int_{S_{n-1}}^{S_n} \frac{dx}{x} \ge \int_{S_{n-1}}^{S_n} \frac{dx}{S_n}=\frac{a_n}{S_n}.
\end{equation}
In view of \eqref{intforlem1}, for any $N \in \N,$
$$B_N=\frac{a_1}{S_1}+ \frac{a_2}{S_2} +\frac{a_3}{S_3} + \ldots +\frac{a_N}{S_N} \le$$
$$\le \frac{a_1}{S_1} + \int_{S_1}^{S_2} \frac{dx}{x} +\int_{S_2}^{S_3} \frac{dx}{x} +\ldots +\int_{S_{N-1}}^{S_N} \frac{dx}{x}= 1 + \int_{S_1}^{S_N} \frac{dx}{x}=$$
$$= 1+ \log(S_N) - \log(S_1) = 1+ \log(S_N) - \log(a_1).$$
\end{proof}

Now we can formulate and prove the statement on the upper bound for the sums from measures of intersections.
\begin{lem}\label{lemsizeEnEm}
Let for any $\tau >1$
\begin{equation}\label{estimpsiqn}
  \psi(q_n) \ge n^{-\tau}, \quad n \in \N.
\end{equation}
 Then, under all conditions of Theorem \ref{mainTHM}, for arbitrary  $a,b \in \N$ with $a <  b$,
  \begin{multline}\label{overlapfinal_finite_set_of_primesSV}
2 \mathop{\sum\sum}_{a\leq m<n\leq b} \mu(E_m\cap E_n) \ \leq \  \left( \sum_{n=a}^{b}   \mu(E_n)  \right)^2 + O\left(\mathop{\sum\sum}_{a\leq m<n\leq b} (q_m,q_n)\min\left(\frac{\psi(q_m)}{q_m},\frac{\psi(q_n)}{q_n}\right)\right) \\[2ex]
+ O\left(\left(\sum_{n=a}^{b}\psi(q_n)\right)\log^+\left(\sum_{n=a}^{b}\psi(q_n)\right)+\sum_{n=a}^{b}\psi(q_n)\right),
\end{multline}
where $\log^+(x):=\max(0,\log(x)), \quad x>0.$
\end{lem}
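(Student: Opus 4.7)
The plan is to estimate each $\mu(E_m\cap E_n)$ by a bilinear Fourier-analytic upper bound of exactly the same flavour as in the proof of Lemma~\ref{lemsizeEn}, and then sum over $a\le m<n\le b$. For each pair I would start with
\[
\chi_{E_m}(x)\,\chi_{E_n}(x)\ \le\ W^{+}_{q_m,\gamma,\varepsilon_m}(x)\,W^{+}_{q_n,\gamma,\varepsilon_n}(x),
\]
integrate against $\mu$, and expand both factors into Fourier series. Using that $\widehat{W}^{+}_{q,\gamma,\varepsilon}(k)$ vanishes unless $q\mid k$, one arrives at
\[
\mu(E_m\cap E_n)\ \le\ \sum_{s,t\in\mathbb{Z}} \widehat{W}^{+}_{q_m,\gamma,\varepsilon_m}(sq_m)\,\widehat{W}^{+}_{q_n,\gamma,\varepsilon_n}(tq_n)\,\widehat{\mu}(-sq_m-tq_n),
\]
where $(\varepsilon_n)_n$ is a sequence of parameters in $(0,1]$ to be fixed later. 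The contribution of $s=t=0$ equals $(2+\varepsilon_m)(2+\varepsilon_n)\psi(q_m)\psi(q_n)$, which, after the factor $2\sum_{m<n}$ is added and Lemma~\ref{lemsizeEn} is applied in the reverse direction, rebuilds the main term $\bigl(\sum_n\mu(E_n)\bigr)^{2}$ modulo errors absorbed in $O\bigl(\sum_n\psi(q_n)\bigr)$.

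The heart of the proof is then to control the remaining $(s,t)\neq(0,0)$ in four ranges. If exactly one of $s,t$ is zero, the inner sum collapses to a single-variable sum already bounded in the proof of Lemma~\ref{lemsizeEn}, which gives contributions absorbed in $O(\sum_n\psi(q_n))$. If $s,t\neq 0$ and $sq_m+tq_n=0$, then $s=-jq_n/d$, $t=jq_m/d$ for $d:=(q_m,q_n)$ and $j\in\mathbb{Z}\setminus\{0\}$, and $\widehat{\mu}(0)=1$ leaves a resonant sum which I would split at the critical $j$ where the two estimates $(2+\varepsilon)\psi(q)$ and $\tfrac{1}{\pi^{2}s^{2}\psi(q)\varepsilon}$ of Proposition~\ref{prop2}(iii) balance; the low-$j$ block sums to $O\bigl(d\,\psi(q_n)/(q_n\sqrt{\varepsilon_m})\bigr)$ and, by the symmetric choice, to $O\bigl(d\,\psi(q_m)/(q_m\sqrt{\varepsilon_n})\bigr)$, while the high-$j$ tail is dominated by these, producing the target gcd term $(q_m,q_n)\min\!\bigl(\psi(q_m)/q_m,\psi(q_n)/q_n\bigr)$ up to $\varepsilon^{-1/2}$ factors. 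In the non-resonant block $s,t\neq 0$, $sq_m+tq_n\neq 0$, $|s|\le m^{5}$, the $\alpha$-separation hypothesis forces $|sq_m-tq_n|\ge q_m^{\alpha}$, so $|\widehat{\mu}(-sq_m-tq_n)|\le h(q_m^{\alpha})$ by monotonicity of $h$, and paired with~\eqref{ub_double_sum} this block is negligible. Finally, in the tail $|s|>m^{5}$ the quadratic decay~\eqref{W_ub_1s2} reduces everything to a sum of $1/s^{2}$ past $m^{5}$; here the extra hypothesis $\psi(q_n)\ge n^{-\tau}$ of Lemma~\ref{lemsizeEnEm} is what prevents the $\psi(q)$ factor in the denominator of~\eqref{W_ub_1s2} from ruining the estimate.

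Finally, summing over $a\le m<n\le b$ and choosing the parameters $(\varepsilon_n)$ in the same spirit as $\varepsilon_n=\min(1,\Psi(n)^{-1-\xi})$ of Lemma~\ref{lemsizeEn}, I expect all non-resonant blocks and the $\varepsilon$-driven excesses from the main term $(2+\varepsilon_m)(2+\varepsilon_n)\psi(q_m)\psi(q_n)-4\psi(q_m)\psi(q_n)$ to be swallowed by $O(\sum_n\psi(q_n))$. The $\log^{+}\bigl(\sum_n\psi(q_n)\bigr)$ factor appearing in the statement enters at the very last stage, via a sum of the shape $\sum_{n}\psi(q_n)/\Psi(n)$, to which Lemma~\ref{lemdivserie} applies and yields exactly a logarithm. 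The main technical obstacle I anticipate is the resonant case $sq_m+tq_n=0$: one must balance, on the one-dimensional sub-lattice of integer multiples of $q_n/(q_m,q_n)$, the two bounds of Proposition~\ref{prop2}(iii) so that both the factor $(q_m,q_n)$ and the factor $\min(\psi(q_m)/q_m,\psi(q_n)/q_n)$ come out with the correct exponents, and so that the residual $\varepsilon^{-1/2}$ losses are eventually absorbed into the $\Psi\log^{+}\Psi+\Psi$ term after summation.
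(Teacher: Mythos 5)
Your overall architecture matches the paper's: majorize $\chi_{E_m}\chi_{E_n}$ by $W^{+}_{q_m,\gamma,\varepsilon_m}W^{+}_{q_n,\gamma,\varepsilon_n}$, expand the product in a Fourier series, isolate the zero frequency, use $\alpha$-separation to kill the near-resonances $1\le|sq_m-tq_n|<q_m^{\alpha}$ with $s\lesssim m^{5}$ (this is where \eqref{estimpsiqn} enters), use the quadratic decay \eqref{W_ub_1s2} for large $s$, and extract the $\log^{+}$ via Lemma~\ref{lemdivserie} applied to $\sum_m\psi(q_m)/\Psi(m)$ coming from the $\varepsilon_m\psi(q_m)\psi(q_n)$ excess. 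All of that is sound and is essentially what the paper does.

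The genuine gap is in your treatment of the resonant frequencies $sq_m+tq_n=0$, $s,t\neq 0$, i.e.\ the part of $\widehat{W}^{+}_{m,n}(0)$ beyond the $j=0$ term. Balancing the two bounds of Proposition~\ref{prop2}(iii) along the sublattice $k=jq_mq_n/(q_m,q_n)$ does, as you say, produce $(q_m,q_n)\min\bigl(\psi(q_m)/q_m,\psi(q_n)/q_n\bigr)$ only up to a factor $\varepsilon_m^{-1/2}$ (or $\varepsilon_n^{-1/2}$), because the $1/s^{2}$ bound \eqref{W_ub_1s2} carries $\varepsilon^{-1}$ in the denominator. With the choice $\varepsilon_m\asymp\Psi(m)^{-1}$ — which you cannot avoid, since it is exactly what makes the excess $\varepsilon_m\psi(q_m)\psi(q_n)$ summable to $\Psi\log^{+}\Psi$ — this loss is $\Psi(m)^{1/2}$, unbounded, and the resulting double sum is of order $E(N)\cdot\Psi(N)^{1/2}$, which is \emph{not} absorbed by $O(E(N))+O(\Psi\log^{+}\Psi+\Psi)$. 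So the two requirements (small $\varepsilon_n$ for the main term, $\varepsilon_n$ not too small for the resonant gcd term) are in genuine conflict on the Fourier side, and your closing hope that the $\varepsilon^{-1/2}$ losses "are eventually absorbed" fails. The paper sidesteps this entirely: it bounds the whole zero coefficient by the Lebesgue measure of $E(q_m,\gamma,(1+\varepsilon_m)\psi)\cap E(q_n,\gamma,(1+\varepsilon_n)\psi)$ and invokes the classical real-variable overlap estimate (Harman, Eq.~3.2.5), $|E(q,\gamma,\psi)\cap E(q',\gamma,\psi)|=4\psi(q)\psi(q')+O\bigl((q,q')\min(\psi(q)/q,\psi(q')/q')\bigr)$, whose gcd error is independent of $\varepsilon$. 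To repair your proof, replace the Fourier-side balancing of the resonant block by this real-variable counting of overlapping intervals (or by an $\varepsilon$-free bound on $\widehat{W}^{+}_{q,\gamma,\varepsilon}(sq)$, which Proposition~\ref{prop2} does not supply in the needed strength).
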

\begin{proof}[Proof of Lemma \ref{lemsizeEnEm}]
In order to receive suitable estimates of the measures of intersections we starts with some Fourier analysis (once again, we move here in parallel with
the corresponding reasonings in~\cite{povezazo}). So, let's fix a sequence of real numbers $(\varepsilon_n)_{n\in \N} \subset (0,1]$ and consider functions
\begin{equation} \label{wmndef}
W_{m,n}^{+}(x):= W_{q_m,\gamma,\varepsilon_m}^+(x) \cdot W_{q_n,\gamma,\varepsilon_n}^+(x), \quad x \in \R, \, m,n \in \N, \, m<n,
\end{equation}
(recall that function $W_{q,\gamma,\varepsilon}^{+}$ is defined by \eqref{Wdef}).
Then,
\begin{eqnarray*}
\mu(E_m\cap E_n) &\leq& \int_{0}^{1} W_{q_m,\gamma,\varepsilon_m}^+(x) W_{q_n,\gamma,\varepsilon_n}^+(x) \, \mathrm{d}\mu(x)   \nonumber
 \\ [2ex]
&=& \int_{0}^{1}W_{m,n}^{+}(x)\mathrm{d}\mu(x).
\end{eqnarray*}
The Fourier coefficients of the product $W_{m,n}^{+}$ are convolutions
\begin{eqnarray}  \label{greek}
\what{W}_{m,n}^{+}(k) & := &   \, \int_{0}^{1}
W_{q_m,\gamma,\varepsilon_m}^{+}(x)W_{q_n,\gamma,\varepsilon_n}^{+}(x) \, \exp(-2\pi kix) \mathrm{d}x    \nonumber \\[2ex]
 & = &  \sum_{j \in \Z} \what{W}_{q_m,\gamma,\varepsilon_m}^{+}(j) \
\what{W}_{q_n,\gamma,\varepsilon_n}^{+}(k-j), \quad k \in \Z.
\end{eqnarray}
Moreover, $\sum\limits_{k \in \Z} \big| \widehat{W}_{m,n}^{+}(k) \big|  < \infty $, so the Fourier series
$$ \sum_{k \in \Z }\widehat{W}_{m,n}^{+}(k)  \exp(2\pi kix)
$$
converges uniformly to $W_{m,n}^{+}(x)$ for all $ x \in \I$.
Hence, it  follows that
\begin{eqnarray*}
\int_0^1 W_{m,n}^{+}(x)  \; \mathrm{d}\mu(x) \; =  \;  \sum_{k\in \Z} \, \widehat{W}_{m,n}^{+}(k)  \;  \widehat{\mu}(-k)   \, .
\end{eqnarray*}
So,
\begin{eqnarray}\label{intersbound}
\mu(E_m\cap E_n) = \mu (E(q_m, \gamma, \psi) \cap E(q_n, \gamma, \psi))&\leq&
\\ [2ex] \leq \sum_{k \in \Z }\what{W}_{m,n}^{+}(k)\what{\mu}(-k) \nonumber
&=& \what{W}_{m,n}^{+}(0) + \sum_{k\in \Z \setminus \{0\}} \what{W}_{m,n}^{+}(k)\what{\mu}(-k).  \label{mit1}  \vspace{2mm}
\end{eqnarray}

We consider the two terms on the right hand side  of \eqref{intersbound} separately. From \eqref{greek} with $k=0,$ (and refer to \eqref{def_En}) we have
\begin{equation} \label{ub_W0_lacunary}
\begin{aligned}
\what{W}_{m,n}^{+}(0) &:=   \, \int_{0}^{1}W_{q_m,\gamma,\varepsilon_m}^{+}(x)W_{q_n,\gamma,\varepsilon_n}^{+}(x)\mathrm{d}x \leq \\[2ex]
&\leq  \, \Big|E\left(q_m, \gamma, (1+\varepsilon_m)\cdot \psi\right) \cap E\left(q_m, \gamma, (1+\varepsilon_n)\cdot \psi\right)\Big|\, ,  \vspace{2mm}
\end{aligned}
\end{equation}
where $ | \, . \, | $ is Lebesgue measure. It is relatively straightforward to verify (see~\cite[Equation~3.2.5]{harman}\footnote{Equation~3.2.5 in \cite{harman} as stated is not correct   -- the `big O' error term is missing.} for the details) that for   any  $q, q' \in \N$
\begin{equation*}
| E(q, \gamma, \psi) \cap E(q', \gamma, \psi)| \, =  \, 4\psi(q)\psi(q')+ O\left( (q,q') \ \min\Big(\frac{\psi(q)}{q},\frac{\psi(q')}{q'}\Big)  \right) \, .
\end{equation*}
Hence, it follows that
\begin{multline*} \label{main_q_Harman_ub}
\Big|E\left(q_m, \gamma, (1+\varepsilon_m)\cdot \psi\right) \cap E\left(q_m, \gamma, (1+\varepsilon_n)\cdot \psi\right)\Big| \,  = \, 4(1+\varepsilon_m)(1+\varepsilon_n)\psi(q_m)\psi(q_n)    \\[1ex]   + \ \  O\left( (q_m,q_n)\min\Big(\frac{\psi(q_m)}{q_m},\frac{\psi(q_n)}{q_n}\Big)  \right) \, .
\end{multline*}
This together with~\eqref{ub_W0_lacunary}  implies that
\begin{equation}  \label{main_q_Harman_ub2}
 \what{W}_{m,n}^{+}(0)  \, \leq  \,  4(1+\varepsilon_m)(1+\varepsilon_n)\psi(q_m)\psi(q_n)  +  O\left( (q_m,q_n)\min\Big(\frac{\psi(q_m)}{q_m},\frac{\psi(q_n)}{q_n}\Big)  \right) \, ,
\end{equation}
which give us the estimate of the first term on the right hand side  of \eqref{mit1}.

We proceed with considering the second term, which we will denote by $S_{m,n}$. In view of \eqref{fcoef} and \eqref{greek}, it follows that
\begin{eqnarray} \label{def_S_m_n}
S_{m,n} &:= & \sum_{k \in\mathbb{Z}\setminus \{0\} } \what{W}_{m,n}^{+}(k)\what{\mu}(-k) = \nonumber \\[2ex]
&= & \mathop{ \mathop{\sum\sum}_{ s,t\in\mathbb{Z} } }_{ sq_m-tq_n \neq 0}
\what{W}_{q_m,\gamma,\varepsilon_m}^{+}(sq_m)
\what{W}_{q_n,\gamma,\varepsilon_n}^{+}(tq_n)
\what{\mu}\left(-(sq_m+tq_n)\right).
\end{eqnarray}
We decompose $S_{m,n}$ into three sums:
$$ S_{m,n}= S_1(m,n)+S_2(m,n)+S_3(m,n), \vspace{2mm} $$
where \vspace{2mm}
\begin{eqnarray*}
S_1(m,n) &:=&  \sum_{t\in \Z \setminus \{0\}}   \what{W}_{q_m,\gamma,\varepsilon_m}^+(0) \what{W}_{q_n,\gamma,\varepsilon_n}^{+}(tq_n)\what{\mu}(-tq_n), \\[2ex]
S_2(m,n) &:=&  \sum_{s\in \Z \setminus \{0\}}   \what{W}_{q_n,\gamma,\varepsilon_n}^+(0) \what{W}_{q_m,\gamma,\varepsilon_m}^{+}(sq_m)\what{\mu}(-sq_m), \\[2ex]
S_3(m,n) &:=& \mathop{ \mathop{\sum\sum}_{ s,t\in\mathbb{Z}\setminus \{0\} } }_{sq_m+tq_n\neq 0}\what{W}_{q_m,\gamma,\varepsilon_m}^{+}(sq_m)\what{W}_{q_n,\gamma,\varepsilon_n}^{+}(tq_n)\what{\mu}\left(-(sq_m+tq_n)\right).\vspace{2mm}
\end{eqnarray*}
Inequalities ~\eqref{fcoef_zero}, ~\eqref{ub_single_sum} and balance condition ~\eqref{growcond} imply that, for any $m,n \in \N,$ $m<n$,
\begin{equation}\label{ests1}
|S_1(m,n)| \ \ll \  \frac{(2+\varepsilon_m)\psi(q_m)}{n^{\rho}}\sum_{t\in \Z }  \what{W}_{q_n,\gamma,\varepsilon_n}^{+}(tq_n)  \ \ll \
\frac{\psi(q_m)}{n^{\rho}\varepsilon_n^{1/2}}\, .
\end{equation}
Symmetrically, for any $m,n \in \N,$ $m<n,$
\begin{equation}\label{ests2}
|S_2(m,n)| \ \ll \
\frac{\psi(q_n)}{m^{\rho}\varepsilon_m^{1/2}}\, .
\end{equation}

Now we decompose $S_3$ into two sums:
$$   S_3(m,n) \, =\, S_4(m,n) + S_5(m,n), $$
where
$$ S_4(m,n) \ := \mathop{ \mathop{\sum\sum}_{ s,t\in\mathbb{Z}\setminus \{0\} } }_{|sq_m-tq_n|\geq q_m^{\alpha}} \! \what{W}_{q_m,\gamma,\varepsilon_m}^{+}(-sq_m)\what{W}_{q_n,\gamma,\varepsilon_n}^{+}(tq_n)   \what{\mu}\left( sq_m-tq_n\right)  $$
and
\begin{equation} \label{def_S_7}
S_5(m,n) \ := \mathop{ \mathop{\sum\sum}_{ s,t\in\mathbb{Z}\setminus \{0\} } }_{1\leq |sq_m-tq_n|<q_m^{\alpha}}   \!\!\!\! \what{W}_{q_m,\gamma,\varepsilon_m}^{+}(-sq_m)\what{W}_{q_n,\gamma,\varepsilon_n}^{+}(tq_n)   \what{\mu}\left( sq_m-tq_n\right),
\end{equation}
here $\alpha \in (0,1)$ is the constant from $\alpha$-separability condition of Theorem \ref{mainTHM}.
Regarding $S_4$, by making use of balance condition \eqref{growcond} with the restriction $|sq_m-tq_n|\geq q_m^{\alpha}$ and inequality \eqref{ub_double_sum}, it follows that
\begin{equation}\label{ests4}
|S_4(m,n)| \ \ll \ \frac{1}{m^{\rho}}\mathop{\sum\sum}_{ s,t\in\mathbb{Z}\setminus \{0\} }
\left|\what{W}_{q_m,\gamma,\varepsilon_m}^{+}(-sq_m)\right| \left|\what{W}_{q_n,\gamma,\varepsilon_n}^{+}(tq_n)\right| \ \ll \
\frac{1}{m^{\rho}\varepsilon_m^{1/2}\varepsilon_n^{1/2}}\, .
\end{equation}

Finally, we decompose $S_5$ into two sums:
$$   S_5(m,n) \, =\, S_6(m,n) + E(m,n), $$
where
$$ S_6(m,n):= \mathop{\mathop{ \mathop{\sum\sum}_{s,t\in\mathbb{Z}\setminus \{0\} } }_{s> m^3/\psi(q_m)}}_{1\leq |sq_m-tq_n|<q_m^{\alpha}}\hspace{-3mm}   \what{W}_{q_m,\gamma,\varepsilon_m}^{+}(-sq_m)\what{W}_{q_n,\gamma,\varepsilon_n}^{+}(tq_n)  \what{\mu}\left( sq_m-tq_n\right) \,  \vspace{2mm}
$$
and
$$ E(m,n):= \mathop{\mathop{ \mathop{\sum\sum}_{s,t\in\mathbb{Z}\setminus \{0\} } }_{1\leq s \leq m^3/\psi(q_m)}}_{1\leq |sq_m-tq_n|<q_m^{\alpha}}\hspace{-3mm}  \what{W}_{q_m,\gamma,\varepsilon_m}^{+}(-sq_m)\what{W}_{q_n,\gamma,\varepsilon_n}^{+}(tq_n) \what{\mu}\left( sq_m-tq_n\right).
$$
The restriction $1 \leq |sq_m-tq_n| < q_m^{\alpha}$ implies that
\begin{equation} \label{index_restriction_first_boundsv}
0<\left|s-t\frac{q_n}{q_m}\right| < 1 \, .
\end{equation}
Hence,  if nonzero $s$ and $t$ satisfy \eqref{index_restriction_first_boundsv} then both necessarily  must have the same sign and also for each fixed integer $s$ there exists a set $T_s$ of at most two non-zero integers  $t$  satisfying  the restriction $1 \leq |sq_m-tq_n| < q_m^{\alpha}.$ So, we can write $S_6$ as a single sum
$$S_6(m,n)= \mathop{\mathop{\sum}_{ s> m^3/\psi(q_m) :  } }_{t \in T_s}
\what{W}_{q_m,\gamma,\varepsilon_m}^{+}(-sq_m) \what{W}_{q_n,\gamma,\varepsilon_n}^{+}(tq_n)\what{\mu}\left( sq_m-tq_n\right).$$
So, using the trivial bound  $|\what{\mu}(t)|  \leq 1 $  together with  \eqref{W_ub_psi} to bound $|\what{W}_{q_n,\gamma,\varepsilon_n}^{+}(t q_n)|$ and \eqref{W_ub_1s2} to bound  $ |\what{W}_{q_m,\gamma,\varepsilon_n}^{+}(sq_m)| $, we obtain that for any integers $ 1 \leq m < n $
\begin{eqnarray}\label{ests6}
|S_6(m,n)| & \ll  &   \mathop{\mathop{\sum}_{ s> m^3/\psi(q_m) :  } }_{t \in T_s}
|\what{W}_{q_m,\gamma,\varepsilon_m}^{+}(sq_m)| \ | \what{W}_{q_n,\gamma,\varepsilon_n}^{+}(tq_n)| \nonumber
 \\[2ex]
& \ll  & \sum_{s> m^3/\psi(q_m)}\frac{1}{s^2\psi(q_m)  \, \varepsilon_m }\psi(q_n)  \ \ll  \  \frac{\psi(q_n)}{m^3 \, \varepsilon_m } \, .
\end{eqnarray}
Working with $E(m,n)$ note, that in view of \eqref{estimpsiqn} the condition $s \leq m^3/\psi(q_m)$ implies that $s \le m^5.$ This, together
with the fact that $(q_n)_{n\in\N}$ is $\alpha$-separated implies that $E(m,n)$ is empty sum. Thus,
\begin{equation}\label{estime}
  E(m,n)=0.
\end{equation}
So, from upper bounds \eqref{ests1}, \eqref{ests2}, \eqref{ests4}, \eqref{ests6} and equality \eqref{estime}, for any fixed $a,b \in \N$ with $a<b,$
and all natural numbers $m,n,$ with $a\leq m<n\leq b:$
\begin{equation}\label{estsmn}
  |S_{m,n}| \ll \frac{\psi(q_m)}{n^{\rho}\varepsilon_n^{1/2}}\, + \frac{\psi(q_n)}{m^{\rho}\varepsilon_m^{1/2}}\, + \frac{1}{m^{\rho}\varepsilon_m^{1/2}\varepsilon_n^{1/2}}\, + \frac{\psi(q_n)}{m^3 \, \varepsilon_m } \,.
\end{equation}
Now, given $a\in\N$, define, for all $n\in\N$, $n\geq a$,
\begin{equation} \label{def_epsilon_n_main}
\varepsilon_n:=\min\left(1,\, \left( \sum_{k=a}^{n}\psi(q_k) \right)^{-1}\right).
\end{equation}
Then, since $\psi(k) \le 1$ for all $k \in \N$,
\begin{equation} \label{epsilon_inverse_bound}
\varepsilon_n^{-1} \, \leq \, \max\left(1,n\right) \, \le \, n, \quad n \ge a.
\end{equation}
Therefore,
$$\sum_{n=a}^{\infty}\frac{1}{n^{\rho}\varepsilon_n^{1/2}}  \, \le \, \sum_{n=a}^{\infty}\frac{1}{n^{\rho-\frac{1}{2}}} \, < \, \infty  \, ,$$
(recall that $\rho > 2$), and so, it follows from ~\eqref{estsmn} that for any $a,b \in \N,$ $a<b,$
\begin{equation} \label{sumofsmn}
\mathop{\sum\sum}_{a\leq m<n\leq b}|S_{m,n}| \, \ll \, \sum_{n=a}^{b} \psi(q_n) + \mathop{\sum\sum}_{a\leq m<n\leq b}\frac{1}{n^{\rho}\varepsilon_m^{1/2}\varepsilon_n^{1/2}}  +  \mathop{\sum\sum}_{a\leq m<n\leq b}\frac{\psi(q_n)}{m^3 \, \varepsilon_m } \,.
\end{equation}
For the third sum in the right hand side of \eqref{sumofsmn}, from inequalities \eqref{epsilon_inverse_bound}, we have
\begin{equation}\label{thirdsumest}
  \mathop{\sum\sum}_{a\leq m<n\leq b}\frac{\psi(q_n)}{m^3 \, \varepsilon_m } \le \mathop{\sum\sum}_{a\leq m<n\leq b}\frac{\psi(q_n)}{m^2} \ll
  \sum_{n=a}^{b} \psi(q_n), \quad a,b \in \N, \, a<b.
\end{equation}
In order to estimate the second sum in the right hand side of \eqref{sumofsmn} let's consider two cases.

\noindent Case 1: $\sum\limits_{k=a}^{b}\psi(q_k) > 1$. Then, by \eqref{def_epsilon_n_main},
$$ \frac{1}{n^{\rho}\varepsilon_m^{1/2}\varepsilon_n^{1/2}} \leq \frac{1}{n^{\rho}}\sum_{k=a}^{b}\psi(q_k),$$
and so, since $\rho >2,$
\begin{equation} \label{sum_const_leq_linear_case1}
\mathop{\sum\sum}\limits_{a\leq m<n\leq b} \frac{1}{n^{\rho}\varepsilon_m^{1/2}\varepsilon_n^{1/2}} \leq  \left(\mathop{\sum\sum}\limits_{a\leq m<n\leq b} \frac{1}{n^{\rho}}\right) \cdot \left(\sum_{k=a}^{b}\psi(q_k)\right) \ll \sum_{k=a}^{b}\psi(q_n).
\end{equation}

\noindent Case 2: $\sum\limits_{k=a}^{b}\psi(q_k) \le 1$. It follows, by \eqref{def_epsilon_n_main}, that $\varepsilon_n=1$ for all $a\leq n\leq b$,\vspace{-2mm}
therefore
$$
\frac{1}{n^{\rho}\varepsilon_m^{1/2}\varepsilon_n^{1/2}} = \frac{1}{n^{\rho}}.
$$
By using Lemma~\ref{lemsizeEnEm} with $\tau = \frac{\rho}{2}$, we get
$$\frac{1}{n^{\rho}\varepsilon_m^{1/2}\varepsilon_n^{1/2}} = \frac{1}{n^{\rho}}\leq\frac{\psi(q_n)}{n^{\frac{\rho}{2}}}, \quad \ n \in \N, $$
 and so
\begin{equation} \label{sum_const_leq_linear_case2}
\mathop{\sum\sum}\limits_{a\leq m<n\leq b} \frac{1}{n^{\rho}\varepsilon_m^{1/2}\varepsilon_n^{1/2}} \leq \mathop{\sum\sum}\limits_{a\leq m<n\leq b} \frac{\psi(q_n)}{n^{\frac{\rho}{2}}} \leq \mathop{\sum\sum}\limits_{a\leq m<n\leq b} \frac{\psi(q_n)}{m^{\frac{\rho}{2}}}  \ll \sum_{n=a}^b\psi(q_n).
\end{equation}
Both cases give us the same upper bound. So, estimates \eqref{thirdsumest}, \eqref{sum_const_leq_linear_case1} and \eqref{sum_const_leq_linear_case2}, together with \eqref{sumofsmn}, imply that, for all $a,b \in \N$ with $a<b$,
$$
\mathop{\sum\sum}_{a\leq m<n\leq b}|S_{m,n}| \, \ll \, \sum_{n=a}^{b}\psi(q_n) \, .
$$
Therefore, by \eqref{intersbound},
\begin{equation} \label{sum_const_leq_linear}
\mathop{\sum\sum}\limits_{a\leq m<n\leq b}\mu(E_m\cap E_n) \leq  \mathop{\sum\sum}\limits_{a\leq m<n\leq b}\what{W}_{m,n}^{+}(0)+ O\left(\sum_{n=a}^{b}\psi(q_n) \right) .
\vspace{3ex}
\end{equation}
We now turn our attention to estimating the first  term on the right hand of \eqref{sum_const_leq_linear}. Recall that according to \eqref{main_q_Harman_ub2}
\begin{equation*}
 \what{W}_{m,n}^{+}(0)  \, \leq  \,  4(1+\varepsilon_m)(1+\varepsilon_n)\psi(q_m)\psi(q_n)  +  O\left( (q_m,q_n)\min\Big(\frac{\psi(q_m)}{q_m},\frac{\psi(q_n)}{q_n}\Big)  \right) \,.
\end{equation*}
Since $(\varepsilon_n)_{n \ge a}$ is decreasing and $\varepsilon_n \le 1, \quad n \ge a,$ for all $m,n \in \N$ with $a\le m<n \le b,$
\begin{equation} \label{sum_eps_psipsi_developed}
4(1+\varepsilon_m)(1+\varepsilon_n)\psi(q_m)\psi(q_n) \, \leq \, 4\psi(q_m)\psi(q_n) + 12 \varepsilon_m \psi(q_m) \psi(q_n) \, .
\end{equation}
Once again, we will consider two cases.

\noindent Case 1:  $\sum\limits_{k=a}^b\psi(q_k)\le 1$.
It follows, by \eqref{def_epsilon_n_main}, that $\varepsilon_n=1$ for all $a\leq n\leq b$,\vspace{-2mm}
therefore
\begin{equation} \label{ub_sum_eps_psipsi_simple}
\mathop{\sum\sum}\limits_{a\leq m<n\leq b} \varepsilon_m\psi(q_m)\psi(q_n)\leq  \mathop{\sum\sum}\limits_{a\leq m<n\leq b}\psi(q_m)\psi(q_n)<\left(\sum_{n=a}^b\psi(q_n)\right)^2<\sum_{n=a}^b\psi(q_n).
\end{equation}

\noindent Case 2: $\sum\limits_{k=a}^b\psi(q_k) > 1 $. Then, by \eqref{def_epsilon_n_main},
\begin{equation} \label{ub_sum_eps_psipsi}
\begin{aligned}
\mathop{\sum\sum}\limits_{a\leq m<n\leq b} &\varepsilon_m\psi(q_m)\psi(q_n)= \mathop{\sum\sum}\limits_{a\leq m<n\leq b} \psi(q_m)\psi(q_n)\ \min\left(1, \left(  \sum_{k=a}^{m}\psi(q_k)  \right)^{-1} \right) =\\[2ex]
&= \
\mathop{\sum\sum}\limits_{a\leq m<n\leq b}\frac{\psi(q_m)\psi(q_n)}{\max\left(1, \sum_{k=a}^{m}\psi(q_k) \right)} \leq \
\sum_{n=a}^{b}\psi(q_n)\sum_{m=a}^{b} \frac{\psi(q_m)}{\sum_{k=a}^{m}\psi(q_k)}  \, .
\end{aligned}
\end{equation}
From Lemma \ref{lemdivserie} we have that, for all $a,b \in \N$ with $a<b$,
$$\sum_{m=a}^{b} \frac{\psi(q_m)}{\sum_{k=a}^{m}\psi(q_k)} \le 1- \log(\psi(q_a)) + \log \left(\sum_{n=a}^b\psi(q_n) \right).$$
This together with~\eqref{ub_sum_eps_psipsi} implies that
\begin{equation} \label{ub_sum_eps_psipsi_2}
\begin{aligned}
\mathop{\sum\sum}\limits_{a\leq m<n\leq b} \varepsilon_m\psi(q_m)\psi(q_n) \ &\leq \ \left(\sum_{n=a}^b\psi(q_n) \right)\left( 1- \log(\psi(q_a)) + \log \left(\sum_{n=a}^b\psi(q_n) \right)\right)\\[2ex]
\ &\ll \ \sum_{n=a}^b\psi(q_n) \cdot \log \left( \sum_{n=a}^b\psi(q_n) \right).
\end{aligned}
\end{equation}
Combining the estimates ~\eqref{main_q_Harman_ub2}, ~\eqref{sum_eps_psipsi_developed}, ~\eqref{ub_sum_eps_psipsi_simple} and \eqref{ub_sum_eps_psipsi_2} we find that
\begin{multline} \label{ub_sum_eps_psipsi_combined}
\mathop{\sum\sum}\limits_{a\leq m<n\leq b}  W_{m,n}^{+}(0) \leq  2\left( \sum_{n=a}^{b}\psi(q_n)\right)^2 +  O\left(\left(\sum_{n=a}^b\psi(q_n) \right) \log^+ \left(\sum_{n=a}^b\psi(q_n) \right)\right) +\\[2ex] + \  O\left( \mathop{\sum\sum}_{a\leq m<n\leq b}(q_m,q_n)\min\Big(\frac{\psi(q_m)}{q_m},\frac{\psi(q_n)}{q_n}\Big) \right)   \, ,
\end{multline}
here we used the inequality
$$\left( \sum_{n=a}^{b}\psi(q_n)\right)^2 = \sum_{n=a}^{b}\psi^2(q_n) + 2 \mathop{\sum\sum}\limits_{a\leq m<n\leq b} \psi(q_m)\psi(q_n) \ge $$
$$ \ge 2 \mathop{\sum\sum}\limits_{a\leq m<n\leq b} \psi(q_m)\psi(q_n).$$
On combining \eqref{sum_const_leq_linear} and  \eqref{ub_sum_eps_psipsi_combined} we find that
\begin{multline*} 
2 \mathop{\sum\sum}_{a\leq m<n\leq b} \mu(E_m\cap E_n) \ \leq \  4\left( \sum_{n=a}^{b}\psi(q_n)\right)^2 + O\left(\mathop{\sum\sum}_{a\leq m<n\leq b} (q_m,q_n)\min\left(\frac{\psi(q_m)}{q_m},\frac{\psi(q_n)}{q_n}\right)\right) \\[2ex]
+ O\left(\left(\sum_{n=a}^{b}\psi(q_n)\right) \log^+\left(\sum_{n=a}^{b}\psi(q_n)\right)+\sum_{n=a}^{b}\psi(q_n)\right).
\end{multline*}
Now we use Lemma \ref{lemsizeEn} to complete the proof of Lemma \ref{lemsizeEnEm}.
\end{proof}

\subsection{Applying Lemma \ref{ebc} to prove Theorem \ref{mainTHM} }
In this section we will prove our main result, namely Theorem \ref{mainTHM}. We deduce it from
Lemma \ref{ebc} using estimates from Lemma \ref{lemsizeEn} and Lemma \ref{lemsizeEnEm}. In order to
do this we will show that all conditions of mentioned above lemmas are satisfied under conditions of
Theorem \ref{mainTHM}.


\begin{proof}[Proof of Theorem~\ref{mainTHM}]
Note that under balance condition~\eqref{growcond_f} and~\eqref{growcond} condition~\eqref{cond2_2} (and, actually, condition \eqref{cond2} as well) is satisfied. Therefore, the convergence part of Theorem~\ref{mainTHM} follows from~\cite[Theorem~2]{povezazo} 
Because of this, we need to prove only the divergence part. So in the rest of the proof we assume
\begin{equation}\label{divercond}
  \Psi(N):=\sum_{n=1}^N\psi(q_n) \to \infty, \quad \text{ when }  N \to \infty.
\end{equation}

Note that all conditions of Lemma \ref{lemsizeEn} are satisfied under assumptions of Theorem \ref{mainTHM} and condition \eqref{divercond}.

We proceed by showing that it is enough to prove Theorem~\ref{mainTHM} with the extra assumption of~\eqref{estimpsiqn}. 
To this end, introduce two new auxiliary functions $\omega, \psi^{\ast}: \ca{A}\to [0,1]$,
$$\omega(q_n)=n^{-\tau}, \quad n \in \N,$$
$$\psi^{\ast}(q_n)= \max \{\psi(q_n), \omega(q_n)\}, \quad n \in \N.$$
Note that
$$\sum_{n=1}^{\infty}\omega(q_n) = \sum_{n=1}^{\infty} n^{-\tau} < \infty,$$
so by convergence part of Theorem~\ref{mainTHM} already justified in the beginning of the proof (or see ~\cite[Theorem~2]{povezazo}) we have that counting function
$R(x,N;\gamma,\omega, \ca{A})$ remains bounded as $N \to \infty,$ therefore, in view of Proposition~\ref{prop1}  (i),
$x \notin W_{\ca{A}}(\gamma; \omega).$ So, by Proposition~\ref{prop1}  (iii), we have
$$R(x,N;\gamma,\psi^{\ast}, \ca{A})=R(x,N;\gamma,\psi, \ca{A})+O(1).$$
This implies that the conclusion of the theorem~\eqref{countFSPresultsv} for $\psi^*$ is equivalent to~\eqref{countFSPresultsv} with original function $\psi$.
In the meantime, $\psi^{*}$ obviously satisfies condition \eqref{estimpsiqn}. So, without loss of generality, we can assume that $\psi$ satisfies condition \eqref{estimpsiqn}.

So, we have checked that under conditions of Theorem~\ref{mainTHM} and assumption \eqref{divercond} all conditions of Lemma \ref{lemsizeEn} and Lemma \ref{lemsizeEnEm} are fulfilled, and now we can start to apply Lemma~\ref{ebc}. Using \eqref{lem_sum_mu_good_result} and~\eqref{overlapfinal_finite_set_of_primesSV} on the right-hand side of~\eqref{integral_square_estimate},  we find that, for $f_n(x)$ and $f_n$ defined by~\eqref{Harman_choice_parameters} and for any $a,b\in\N$  with $a< b$,
\begin{equation}
\begin{aligned}\label{integral_square_main_O}
\int_{\I} \left(\sum_{n=a}^{b} \big( f_n(x) -  f_n \big) \right)^2\mathrm{d}\mu(x)\ \, =  \, O&\Bigg( \ \Big(\sum_{n=a}^b\psi(q_n)\Big) \Big(\log^+\Big(\sum_{n=a}^b\psi(q_n)\Big)+1\Big)
\\[1ex]
&+  \ \mathop{\sum\sum}_{a\leq m<n\leq b} (q_m,q_n)\min\left(\frac{\psi(q_m)}{q_m},\frac{\psi(q_n)}{q_n}\right)  \ \Bigg).
\end{aligned}
\end{equation}
Let $m \in \N $ be the smallest integer satisfying  $a\leq m\leq b$ such that
\begin{equation} \label{ie1_c}
\sum_{n=a}^{m}\psi(q_n)  \, \geq   \, \frac{1}{2}\sum_{n=a}^{b}\psi(q_n).
\end{equation}
Note that by the definition of $m$, we have that
\begin{equation} \label{ie2_c}
\sum_{n=m}^{b}\psi(q_n)  \, \geq  \, \frac{1}{2}\sum_{n=a}^{b}\psi(q_n)
\end{equation}
and that for any integer $n$ such that $m\leq n\leq b$
\begin{equation} \label{thm1_Psi_is_large}
2\Psi(n) \, = 2 \sum_{k=1}^{n} \psi(q_k) \, \geq  \,  \sum_{k=a}^b\psi(q_k).
\end{equation}
From inequalities ~\eqref{ie1_c}, ~\eqref{ie2_c} and \eqref{thm1_Psi_is_large} we have
$$
\left(\sum_{n=a}^b\psi(q_n)\right)\left(\log^+\left(\sum_{n=a}^b\psi(q_n)\right)+1\right) \leq  \
2\left(\sum_{n=m}^b\psi(q_n)\right)\left(\log^+\left(\sum_{n=a}^b\psi(q_n)\right)+1\right) \leq$$

$$\leq \
2\left(\sum_{n=m}^b\psi(q_n)\left(\log^+\left(2\Psi(n)\right)+1\right)\right)
\leq \
2\left(\sum_{n=m}^b\psi(q_n)\left(\log^+\Psi(n)+2\right)\right) \leq $$

$$\leq  \
2\left(\sum_{n=a}^b\psi(q_n)\left(\log^+\Psi(n)+2\right)\right)  \, .
$$
Therefore,
\begin{equation*}
\begin{aligned}
 \left(\sum_{n=a}^b\psi(q_n)\right)& \left(\log^+\left(\sum_{n=a}^b\psi(q_n)\right)
 +1\right) +
\\[1ex]
&+ \ \mathop{\sum\sum}_{a\leq m<n\leq b} (q_m,q_n)\min\left(\frac{\psi(q_m)}{q_m},\frac{\psi(q_n)}{q_n}\right) \ \leq \  2 \sum_{n=a}^b\phi_n  \, ,
\end{aligned}
\end{equation*}
\noindent where
\begin{equation} \label{def_varphi_main}
\phi_n:= \psi(q_n)\left(\log^+\Psi(n)+2\right)
+\sum_{m=1}^{n-1}(q_m,q_n)\min\left(\frac{\psi(q_m)}{q_m},\frac{\psi(q_n)}{q_n}\right) \, .
\end{equation}
This, together with \eqref{integral_square_main_O}, implies condition \eqref{ebc_condition1} of Lemma~\ref{ebc}.
Now we use Lemma~\ref{ebc} with $X, f_n(x)$ and
 $ f_n$ given by~\eqref{Harman_choice_parameters} and $\phi_n$ by  \eqref{def_varphi_main}. It is left to note that for
 any $n \in \N$, we have that $f_n \leq \phi_n$, $f_n \leq 2 $ and
 \begin{equation*}
\Phi(N):=\sum_{n=1}^N\phi_n  \, \leq  \,  \Psi(N)\left(\log^+\Psi(N)+2\right) \, + \, E(N) \, ,
\end{equation*}
where $ E(N) $ is given by \eqref{error2sv}. Counting statement \eqref{countFSPresultsv} now follows from \eqref{ebc_conclusion}.
\end{proof}

\vspace{13mm}

\vspace{7mm}

\noindent Volodymyr Pavlenkov: Department of Mathematics,
University of York,


\noindent\phantom{Volodymyr Pavlenkov: }Heslington, York, YO10
5DD, England;

\noindent\phantom{Volodymyr Pavlenkov: }Igor Sikorsky Kyiv Polytechnic Institute, Kyiv, Ukraine.

\noindent\phantom{Volodymyr Pavlenkov: }e-mail: pavlenkovvolodymyr@gmail.com

 \vspace{5mm}
 
 \noindent Evgeniy Zorin: Department of Mathematics,
University of York,


\noindent\phantom{Evgeniy Zorin: }Heslington, York, YO10
5DD, England.


\noindent\phantom{Evgeniy Zorin: }e-mail: evgeniy.zorin@york.ac.uk
\end{document}